\documentclass[12pt]{amsart}
\usepackage{amsmath, amstext, amsthm, amssymb, amsfonts, latexsym}
\usepackage{graphicx} 
\usepackage{enumitem}

\usepackage[colorlinks=true,
            linkcolor=black,
            citecolor=blue,
            urlcolor=black]{hyperref}

\newcommand{\ET}{\mathbf{ET}}
\newcommand{\E}{\mathbb{E}}
\newcommand{\GT}{\mathbf{GT}}

\newcommand{\N}{\mathbb{N}}
\newcommand{\RR}{\mathbb{R}^2}

\newcommand{\Rd}{\mathbb{R}^d}

\newcommand{\R}{\mathbb{R}}
\newcommand{\ST}{\mathbf{ST}}
\newcommand{\TT}{\mathbb{T}^2}
\newcommand{\TTT}{\mathbb{T}^3}
\newcommand{\Td}{\mathbb{T}^d}
\newcommand{\T}{\mathbb{T}}

\newcommand{\Zd}{\mathbb{Z}^d}
\newcommand{\Z}{\mathbb{Z}}
\newcommand{\bL}{\mathbf{L}}
\newcommand{\bP}{\mathbb{P}}

\newcommand{\cA}{\mathcal{A}}
\newcommand{\cE}{\mathcal{E}}
\newcommand{\cF}{\mathcal{F}}
\newcommand{\cL}{\mathcal{L}}
\newcommand{\cM}{\mathcal{M}}
\newcommand{\cR}{\mathcal{R}}

\newcommand{\hA}{\hat{A}}
\newcommand{\hD}{\hat{D}}
\newcommand{\half}{\frac{1}{2}}
\newcommand{\halfc}{\frac{c}{2}}
\newcommand{\hf}{\hat{f}}
\newcommand{\hm}{\hat{m}}
\newcommand{\m}{\mathbf{m}}
\newcommand{\pext}{\pi_{\text{ext}}}

\newcommand{\tU}{\tilde{U}}

\newcommand{\tvarphi}{\tilde{\varphi}}

\newcommand{\ty}{\tilde{y}}

\newcommand{\tu}{\tilde{u}}

\newcommand{\trho}{\tilde{\rho}}
\newcommand{\hx}{\hat{x}}
\newcommand{\hy}{\hat{y}}
\newcommand{\hz}{\hat{z}}
\newcommand{\hU}{\hat{U}}
\newcommand{\zero}{\underline{0}}
\newcommand{\ind}{\operatorname{ind}}
\newcommand{\diam}{\operatorname{diam}}
\newcommand{\trace}{\operatorname{trace}}
\newcommand{\adj}{\operatorname{adj}}

\DeclareMathOperator{\Diff}{Diff}
\DeclareMathOperator{\GL}{GL}
\DeclareMathOperator{\Mtt}{\mathbb{M}_{2 \times 2}(\Z)}
\DeclareMathOperator{\MttR}{\mathbb{M}_{2 \times 2}(\R)}
\DeclareMathOperator{\Mdd}{\mathbb{M}_{d \times d}(\Z)}
\DeclareMathOperator{\MddR}{\mathbb{M}_{d \times d}(\R)}
\DeclareMathOperator{\Mdk}{\mathbb{M}_{d \times k}(\Z)}
\DeclareMathOperator{\Mdo}{\mathbb{M}_{d \times 1}(\Z)}
\DeclareMathOperator{\Ort}{O}
\DeclareMathOperator{\SL}{SL}
\DeclareMathOperator{\SO}{SO}
\DeclareMathOperator{\image}{Im}
\DeclareMathOperator{\rank}{rank}
\DeclareMathOperator{\vspan}{span}

\numberwithin{equation}{section}

\theoremstyle{plain}
\newtheorem{theorem}{Theorem}[section]

\newtheorem{proposition}[theorem]{Proposition}
\newtheorem{corollary}[theorem]{Corollary}
\newtheorem{lemma}[theorem]{Lemma}

\theoremstyle{definition}
\newtheorem{definition}[theorem]{Definition}
\newtheorem{example}[theorem]{Example}

\theoremstyle{remark}
\newtheorem{remark}[theorem]{Remark}

\title[Stably ergodic endomorphisms]{Stably ergodic endomorphisms with positive Lyapunov exponents}

\author{Martin Andersson}

\address{Instituto de Matem\'{a}tica Aplicada. Universidade Federal Fluminense. Rua Professor Marcos Waldemar de Freitas Reis, S/N. 24210-201 Niter\'{o}i, Brazil.}
\email{nilsmartin@id.uff.br}
\urladdr{https://martin-andersson.github.io}

\thanks{The author was partially supported CAPES MATH-AMSUD project "Entropy and
	Dimension in Non-uniformly Hyperbolic Dynamics" and by Projeto
Universal CNPq 401737/2025-0}

\subjclass[2020]{Primary 37D25, 37A25;
Secondary 37A05, 37C40}

\begin{document}

\begin{abstract}
We introduce a new criterion which guarantees that a conservative local
	diffeomorphism is both non-uniformly expanding (in particular has all
	its Lyapunov exponents positive) and stably ergodic. The criterion
	involves  averaging  expansion rates over preimages and allows us to
	construct explicit examples in a variety of homotopy classes,
	including those represented by linear endomorphisms with contracting
	directions. 
\end{abstract}

\maketitle

\section{Introduction}

The present work deals with two related problems concerning conservative
endomorphisms. The first is the flexibility of Lyapunov exponents: 
how close can the exponents be made to one another within a
prescribed homotopy class? In particular, 
under what circumstances can they all be positive? 
The second is the construction and characterization of 
stably ergodic non-uniformly expanding maps.

These two problems are naturally intertwined and lead to a striking contrast
between the infinitesimal dynamics and the large-scale topology. We introduce
a simple criterion for stable ergodicity which produces large families of
conservative endomorphisms homotopic to hyperbolic toral endomorphisms with
contracting directions, but for which all Lyapunov exponents are positive
almost everywhere. Thus the linear model has both expanding and contracting
directions, while the nonlinear conservative representative has only positive
Lyapunov exponents.

As for the first problem, there are two natural levels at which it can be
considered. One may ask only about the integrated Lyapunov exponents, that is,
about positivity and closeness on average with respect to Lebesgue measure.
Alternatively, one may ask for all Lyapunov exponents to be positive almost
everywhere. The first version is rather tractable and admits a simple complete
answer. The second is substantially more difficult and is closely tied to the
stable ergodicity problem above.

Throughout the paper, an endomorphism is a non-invertible $C^1$ local
diffeomorphism $f:M\to M$ of a compact Riemannian manifold. We shall be mainly
concerned with the torus $\Td$, equipped with normalized Lebesgue measure $m$.
An endomorphism is called conservative if it preserves $m$. Equivalently,
\[\sum_{x' \in f^{-1}(x)} \frac{1}{|\det Df(x')|} = 1\]
	for every $x \in M$.

For a conservative endomorphism $f:M \to M$, let
\[
\lambda_1(x,f)\leq\cdots\leq\lambda_d(x,f)
\]
denote its Lyapunov exponents, defined for $m$-almost every $x$, and let
\[
\Lambda_i(f)=\int_{M}\lambda_i(x,f)\,dm(x),
\qquad 1\leq i\leq d,
\]
be the corresponding integrated Lyapunov exponents. Since
\[
\sum_{i=1}^d\Lambda_i(f)
=
\int_{M}\log|\det Df(x)|\,dm(x),
\]
their mean value is constrained by the Jacobian.

When $M = \Td$, $d \geq 2$, if one is only interested in the integrated Lyapunov spectrum, there is
essentially no further obstruction. Indeed, we have the following.

\begin{theorem}\label{thm:integrated_squeeze}
Given any endomorphism $f:\Td\to\Td$, $d\geq2$, and any $\epsilon>0$, there
exists a conservative $C^\omega$ endomorphism $g:\Td\to\Td$ with constant
Jacobian, homotopic to $f$, such that
\[
\Lambda_d(g)-\Lambda_1(g)<\epsilon.
\]
\end{theorem}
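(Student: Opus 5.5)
The homotopy class of an endomorphism $f:\Td\to\Td$ is determined by its action on $\pi_1(\Td)=\Zd$, that is, by an integer matrix $A$ with $D:=|\det A|\geq 2$. It therefore suffices to build, for each such $A$, a conservative $C^\omega$ map $g$ with constant Jacobian $D$, homotopic to $x\mapsto Ax$, whose derivative is close to conformal on most of $\Td$. Set $c=D^{1/d}$ and write $\sigma_1(M)\leq\dots\leq\sigma_d(M)$ for the singular values of a matrix $M$. By subadditivity and superadditivity along orbits, together with invariance of $m$,
\[
\Lambda_d(g)\leq\int\log\|Dg\|\,dm,\qquad \Lambda_1(g)\geq\int\log\sigma_1(Dg)\,dm .
\]
Hence
\[
\Lambda_d(g)-\Lambda_1(g)\leq\int\log\frac{\sigma_d(Dg)}{\sigma_1(Dg)}\,dm .
\]
The target is a $g$ with $Dg(x)$ within $\delta$ of $c\,\SO(d)$ off a set of measure $<\delta$, and with $\|Dg\|$ and $\|Dg^{-1}\|$ uniformly bounded (so the bad set contributes $O(\delta)$).

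I would look for $g$ of the form $g=A\circ h$, where $h:\Td\to\Td$ is a volume-preserving diffeomorphism isotopic to the identity. Then $\det Dg\equiv\det A$, so $g$ has constant Jacobian and is conservative (its preimage sum is $D\cdot D^{-1}=1$), and $g$ is homotopic to $A$. The requirement becomes that $Dh(x)\in c\,A^{-1}\SO(d)$ up to small error on most of $\Td$. The only global constraint on $Dh$ is that it averages to the identity, since $h-\mathrm{id}$ is periodic. So I need $I$ to lie in the convex hull (better, the rank-one or lamination convex hull) of $K=c\,A^{-1}\SO(d)$, which is a subset of $\SL^{\pm}$ up to sign. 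This is plausible: $K$ is a compact orbit of $\SO(d)$ containing matrices in all orientations, and averaging $c\,A^{-1}O$ over suitable $O$ can produce the identity. For instance, in $d=2$ write $A^{-1}$ via polar decomposition and average rotations to hit the desired matrix. The sign of $\det A$ is handled by using $\Ort(d)$ if $\det A<0$.

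The construction of $h$ is the main obstacle, and I would proceed by a convex-integration/laminate argument. First write $I$ as an iterated rank-one splitting of matrices in a small neighbourhood of $K$ within $\{\det=\pm\det A^{-1}\}$. Then realise each splitting by a fine volume-preserving "zigzag" (shear-type) perturbation. Concretely, compose many maps $x\mapsto x+\phi(\langle x,\nu\rangle)v$ with $v\perp\nu$; these are exactly volume preserving, isotopic to the identity, and have rank-one derivative increments $\phi'\,v\otimes\nu$. Choosing the $\phi$ with finitely many slopes, at rapidly decreasing scales, drives $Dh$ into a $\delta$-neighbourhood of $K$ except on a set of measure $<\delta$, with uniform bounds throughout. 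Because each piece is an exact shear, $\det Dh\equiv 1$ is kept automatically.

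Finally, to obtain $C^\omega$ regularity, I would take the $\phi$ to be trigonometric polynomials or their analytic approximations. Each shear is then analytic and exactly volume preserving, so $g=A\circ h$ is analytic with constant Jacobian. The estimate above then gives $\Lambda_d(g)-\Lambda_1(g)<\epsilon$ once $\delta$ is small in terms of $\epsilon$ and the uniform bounds.
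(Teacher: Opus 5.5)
Your architecture ($g=A\circ h$, with $h$ a composition of exact analytic shears, and a one-step integral bound on the exponents) is the right one. However, the step you single out as key is false, and so is the quantitative target.

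\textbf{The obstruction.} For any $g$ homotopic to $A$ the lift satisfies $G(x+k)=G(x)+Ak$, so $G(x)-Ax$ is periodic and $\int_{\Td}Dg\,dm=A$. Hence $\|A\|\le\int_{\Td}\|Dg\|\,dm$. Suppose $\|Dg\|\le c(1+\delta)$ off a set of measure $\delta$ and $\|Dg\|\le C$ on it. Then $\|A\|\le c(1+\delta)+C\delta$. Since $\|A\|>c=|\det A|^{1/d}$ unless $A/c$ is orthogonal, $C\ge(\|A\|-c(1+\delta))/\delta$. So bounds on $\|Dg\|$ and $\|Dg^{-1}\|$ that are uniform in $\delta$ cannot exist.

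In the language of your plan: $\operatorname{conv}(K)=cA^{-1}\operatorname{conv}(\SO(d))$ is contained in $\{X:\|AX\|\le c\}$. So $I$ is at positive distance from $\operatorname{conv}(K)$ for every $A$ that is not a multiple of an orthogonal matrix, e.g. $\begin{pmatrix}3&1\\1&1\end{pmatrix}$ or $\mathrm{diag}(2,1)$. A fortiori $I$ is not in the rank-one or lamination hull, nor in the hull of a small neighbourhood of $K$. Your $d=2$ suggestion fails for the same reason. $\operatorname{conv}(\SO(2))$ is the disc of conformal matrices $\begin{pmatrix}a&-b\\ b&a\end{pmatrix}$ with $a^2+b^2\le 1$. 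Averaging $cA^{-1}O$ over rotations therefore gives $cA^{-1}$ times a conformal matrix, which equals $I$ only if $A$ itself is conformal.

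\textbf{The missing idea.} The bad set may, and in fact must, carry large derivatives. If it has measure $O(\delta)$ and there $\log\|Dg\|$ and $\log\|Dg^{-1}\|$ are $O(\log(1/\delta))$, its contribution to $\int\log(\sigma_d/\sigma_1)(Dg)\,dm$ is $O(\delta\log(1/\delta))$, which still tends to $0$. Once you accept this, no hull condition or multi-scale laminate is needed: $Dh$ can be close to the single matrix $\rho A^{-1}$, $\rho=|\det A|^{1/d}$, on most of $\Td$.

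This is exactly what the paper does. Gaussian elimination gives $A\,T_{(i_1,j_1,t_1)}\cdots T_{(i_\ell,j_\ell,t_\ell)}=\rho I$, or $\rho Q^{-1}$ with $Q\in\Ort(d)\setminus\SO(d)$ when $\det A<0$. Each elementary transvection is realised by one Fej\'er-smoothed zigzag shear $x_i\mapsto x_i+t_k s_{c_0,n}(x_j)$. Its slope is close to $t_k$ off a strip of width $c$, and has absolute value at most $|t_k|/c_0$ inside it. Consequently $Df\approx\rho I$ off a set of measure at most $\ell c$, while $\m(Df)\ge\m(A)\bigl(c/(c+2t_{\max})\bigr)^{\ell}$ everywhere.

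Because $c\log c\to 0$, this yields $\int\log\m(Df)\,dm>\log\rho-\epsilon^2$, which is a lower bound on $\Lambda_1$. The identity $\sum_i\Lambda_i=d\log\rho$ then gives the matching upper bound on $\Lambda_d$. Your bound via $\int\log(\sigma_d/\sigma_1)$ would work equally well with this construction.
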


\begin{remark}
In fact, Theorem~\ref{thm:integrated_squeeze} also holds for diffeomorphisms,
with no modification to the proof. In the Anosov setting, 
this conclusion is contained in the much more general
flexibility results of Bochi--Katok--Rodriguez Hertz \cite{BKRH22}, who show
that the Lyapunov spectrum can be varied within the corresponding stable and
unstable sign constraints. In particular, in the volume-preserving case the
whole spectrum can be made arbitrarily close to zero. Our interest here is
primarily in the non-invertible setting.
\end{remark}

Every endomorphism of $\Td$ is homotopic to a unique linear endomorphism,
represented by an integer matrix $A$, with
\begin{equation}
|\det A|=\deg(f).
\end{equation}
Thus Theorem~\ref{thm:integrated_squeeze} says that in every non-invertible
homotopy class one can find a constant-Jacobian conservative endomorphism for
which all integrated Lyapunov exponents are arbitrarily close to
\[
\frac1d\log|\det A|>0.
\]
The problem of ergodicity (and hence almost everywhere non-uniform expansion) is
more delicate, and is the main focus of this work.

We say that a conservative local diffeomorphism is \emph{collectively
expanding} if some iterate $F=f^n$ satisfies
\begin{equation} \label{eq:mexpanding}
\sum_{x'\in F^{-1}(x)}
\frac{\log\m(DF(x'))}{|\det DF(x')|}
>0
\end{equation}
for every $x$, where $\m(\cdot)$ denotes the conorm.

\begin{theorem}\label{thm:ergodic_mexpanding}
Every $C^{1+}$ collectively expanding conservative local diffeomorphism is
non-uniformly expanding almost everywhere and stably ergodic.
\end{theorem}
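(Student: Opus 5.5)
We work with the iterate $F=f^n$ from \eqref{eq:mexpanding}; non-uniform expansion and ergodicity of $f$ follow from those of $F$ (ergodicity of $f$ is implied by ergodicity of $F$). The plan has two parts: first get positivity of the asymptotic conorm averages along Lebesgue-typical orbits, then apply the standard machinery for non-uniformly expanding maps (hyperbolic times, Pliss lemma, bounded distortion) to obtain ergodicity. Stability will come from the fact that condition \eqref{eq:mexpanding} is open in the $C^1$ topology among conservative maps, with uniform constants.

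\textbf{Step 1: the transfer operator.} For conservative $F$ the transfer operator is
\[
(\cL\varphi)(x)=\sum_{x'\in F^{-1}(x)}\frac{\varphi(x')}{|\det DF(x')|}.
\]
It satisfies $\int \cL\varphi\,dm=\int\varphi\,dm$ and $\int \psi\cdot \cL\varphi\,dm=\int (\psi\circ F)\varphi\,dm$. Set $\psi=\log\m(DF)$. Condition \eqref{eq:mexpanding} says $\cL\psi\ge c>0$ everywhere, by compactness and continuity. The key observation is that, by duality, the Birkhoff sum of $\psi$ at time $k$ has expectation $\int \psi\circ F^{j}\,dm=\int\psi\,dm$, which is not directly useful. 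What we actually need is a conditional statement: given the future, the expected value of $\psi$ at the present point is $\cL\psi\ge c$.

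The precise mechanism is a reverse martingale. Let $\cF_j=F^{-j}\mathcal{B}$. Then $\E(\psi\mid\cF_1)=(\cL\psi)\circ F\ge c$, and more generally $\E(\psi\circ F^j\mid\cF_{j+1})=(\cL\psi)\circ F^{j+1}\ge c$. Hence
\[
S_k=\sum_{j<k}\bigl(\psi\circ F^j-(\cL\psi)\circ F^{j+1}\bigr)
\]
is a sum of reverse martingale differences with bounded increments. Azuma's inequality, or the martingale strong law, gives $S_k/k\to0$ almost surely, with exponential tail bounds. Therefore
\[
\liminf_k\frac1k\sum_{j<k}\log\m(DF(F^jx))\ge c\quad\text{for a.e. } x.
\]
Since $\m(DF^k)\ge\prod_j\m(DF\circ F^j)$, this gives non-uniform expansion and all Lyapunov exponents $\ge c/n>0$.

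\textbf{Step 2: ergodicity.} From Step 1, the Pliss lemma yields a positive density of hyperbolic times for almost every $x$. Using $C^{1+}$ bounded distortion, this gives hyperbolic preballs of uniform radius $\delta$, in the sense of Alves--Bonatti--Viana. Each ergodic component then contains, up to measure zero, a ball of radius $\delta/2$, so there are finitely many components. To conclude full ergodicity we need more than finiteness, and this is the main obstacle. I would exploit that the estimate in Step 1 is conditional on an \emph{arbitrary} future. Concretely, fix a Lebesgue density point $z$ of an invariant set $A$. The Azuma bound holds uniformly over the choice of backward branches and is driven only by the preimage weights. It therefore shows that, for every $x$, a set of inverse branches of $F^k$ at $x$ of weight close to $1$ is contracting at exponential rate with bounded distortion.

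Choose $x$ near a density point of $A$, and likewise near a density point of the complementary invariant set $B$. Heavy backward branches from a small ball around any point shrink and spread, covering a region of weight close to $1$. This forces $A$ to have density near $1$ in the image of every ball, so $A$ has full measure. Equivalently, the dual operator $\cL$ acting on indicators of invariant sets ($\cL 1_A=1_A$) must mix, and this contradicts $m(B)>0$.

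\textbf{Step 3: stability.} Conservative $C^{1+}$ maps $G$ that are $C^1$-close to $f$ satisfy \eqref{eq:mexpanding} for $G^n$ with constant $c/2$, since the conorm, the Jacobians and the preimage sets vary continuously. The distortion and Pliss constants depend only on $c$ and on $C^1$ bounds, plus the Hölder constant on $C^{1+}$ neighbourhoods. So Steps 1–2 apply to $G$, giving stable ergodicity.
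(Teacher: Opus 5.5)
Your Step 1 is sound. The identity $\E(\psi\circ F^j\mid F^{-(j+1)}\mathcal{B})=(\cL\psi)\circ F^{j+1}$, together with Azuma applied for each fixed $k$ and Borel--Cantelli, gives $\liminf_k\frac1k\sum_{j<k}\psi(F^jx)\ge c$ a.e. This is a longer but valid substitute for the paper's duality argument over invariant sets. Step 3 is also fine.

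The gap is in Step 2, exactly where you locate the main obstacle. You name the right input: for \emph{every} $x$, backward branches of $m_x$-weight close to $1$ that contract a ball of fixed radius $\delta$ with bounded distortion. This is the paper's Lemma~\ref{lem:lots_of_unst_mfds}; it needs Azuma for the supermartingale $X_n$ on the fiber $(\Sigma_x,m_x)$ of infinite backward branches, not the a.e.\ statement of Step 1. But the mechanism you propose for turning this into ergodicity does not work. If $A$ is fully invariant, then $\cF^k_{\hat y}(A\cap D)=A\cap D^k_{\hat y}$ for every inverse branch. So pulling back only reproduces the relative density of $A$ in $D$ up to the distortion constant; it can never push that density towards $1$. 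To ``force'' density near $1$ you would need the small pieces of $F^{-k}(D)$ to land where $A$ is already dense. That is equidistribution of preimages, a mixing statement stronger than what you want to prove. The argument is also symmetric in $A$ and $B$, so nothing selects $A$. The sentence that $\cL$ ``must mix'' restates the goal.

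What is missing is a device that links the behaviour shared by all points of $D$ along a common backward branch to the forward behaviour of individual points. The paper uses a Hopf argument in the natural extension. Along a good branch, backward Birkhoff averages are constant on $\hat D_{\hat y}$; forward and backward averages agree $\hat m$-a.e.; and forward averages depend only on the base point. To pass from ``$\hat m$-a.e.'' to ``$m$-a.e.\ point of $D$ on a single branch'', it needs the absolute-continuity Lemma~\ref{lem:measure_m_hat_omega}: $\hat m|_{D\times H}\sim m_D\times\nu_H$ with bounded density. Your sketch has no analogue of this. Since $D=B_\delta(x)$ works for every $x$, each $\delta$-ball lies mod $0$ in a single ergodic component, and connectedness concludes.

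Alternatively, your operator idea can be made rigorous as follows.
\begin{enumerate}
\item Approximate $1_A$ in $L^1$ by a continuous $\phi$ with values in $[0,1]$, and use $\cL^k1_A=1_A$ and $\|\cL^k\|_{L^1}\le1$.
\item From the heavy good branches at every point plus H\"older distortion, show that for each $\epsilon$ there is $r_\epsilon>0$, independent of $x$, such that the oscillation of $\cL^k\phi$ on every $r_\epsilon$-ball is eventually at most $3\epsilon$.
\item Deduce that $x\mapsto m(A\cap B_{r_\epsilon}(x))/m(B_{r_\epsilon}(x))$ is continuous with values in $[0,3\epsilon]\cup[1-3\epsilon,1]$. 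By connectedness, $m(A)\le3\epsilon$ or $m(A)\ge1-3\epsilon$, for every small $\epsilon$.
\end{enumerate}
As written, however, Step 2 does not establish ergodicity.
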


Combining this criterion with the constructions developed later in the paper,
we obtain stably ergodic non-uniformly expanding endomorphisms in certain
hyperbolic homotopy classes. These results give a partial positive answer to
the almost-everywhere version of the problem above.

Our present estimates do not yield arbitrary squeezing of the Lyapunov
spectrum while preserving collective expansion. We do not know of any
obstruction to such a conclusion. In fact, this limitation appears to come
from the construction rather than from the dynamics itself: the collective
expansion estimates used here are carried out at the level of the first
iterate, and allowing suitable higher iterates is likely to provide more
flexibility.

\medskip

The term \emph{non-uniformly expanding} has been used in the literature for
several related, but not equivalent, notions of expansion. In one-dimensional
dynamics, various forms of non-uniform expansion had already appeared in the
study of intermittent and unimodal maps. In higher-dimensional smooth
dynamics, the notion which has perhaps been studied most systematically is the
one developed in the work of Alves and Alves--Bonatti--Viana
\cite{Alves00,ABV}. In the absence of critical points, it is expressed by the
condition
\begin{equation} \label{eq:def_nue}
	\limsup_{n\to\infty}
	\frac1n\sum_{j=0}^{n-1}
	\log\|Df(f^j(x))^{-1}\|<0
\end{equation}
for Lebesgue almost every $x$. In the presence of a critical or singular set,
this is supplemented by a slow-recurrence condition.
In this work an endomorphism is \emph{non-uniformly expanding} if it has an
iterate which satisfies \eqref{eq:def_nue}.

A central tool in this theory is the notion of \emph{hyperbolic times}.
Roughly speaking, non-uniform expansion guarantees the occurrence of
sufficiently many times at which the preceding orbit segment behaves as if it
were uniformly expanding. Such times provide inverse branches with uniform
contraction and bounded distortion. Alves~\cite{Alves00} used this mechanism
to construct absolutely continuous invariant measures for multidimensional
non-uniformly expanding maps, while Alves--Bonatti--Viana~\cite{ABV} developed
closely related ideas in the partially hyperbolic setting with mostly
expanding center direction.

This approach subsequently became the basis for a broader statistical theory
of non-uniformly expanding systems. Alves--Viana~\cite{AV02} established
statistical stability for robust classes of such maps, while
Alves--Luzzatto--Pinheiro~\cite{ALP05} constructed Gibbs--Markov inducing
structures and related the tail of the inducing time to statistical properties
such as decay of correlations and the Central Limit Theorem. Variants of the
same framework have also been developed for systems with critical or singular
sets, where control of recurrence to the critical set is combined with the
hyperbolic-time argument.

Among local diffeomorphisms, the principal robust examples studied in this
context are obtained by deforming uniformly expanding maps; see, for instance,
\cite{Alves00,AV02}. The other paradigmatic family is formed by Viana maps
\cite{Viana97}, where critical points play an essential role. These maps have
subsequently served as a basic model in the study of statistical and stochastic
properties of non-uniformly expanding systems. 

Much of this theory is motivated by the construction and study of absolutely
continuous invariant or physical measures. From this point of view, the
conservative setting is rather different: an absolutely continuous invariant
measure is already given from the outset. To the best of our knowledge,
non-uniform expansion in the sense above has not previously been systematically
studied within the class of conservative endomorphisms. Here the main questions
are instead the occurrence of positive Lyapunov exponents and their relation
with ergodicity and stable ergodicity.

One of the main ideas in this paper is that, in the conservative setting,
averaging expansion over the inverse branches provides a direct mechanism for
producing non-uniform expansion. Indeed, for each $x$ the weights $|\det
Df(x')|^{-1}$ for  $x'\in f^{-1}(x)$, form a probability distribution on the fiber
$f^{-1}(x)$. Collective expansion requires the logarithm of the conorm to have
positive average with respect to this distribution. We show that this fiberwise
condition already implies non-uniform expansion almost everywhere.

Stable ergodicity has been studied predominantly in the setting of
conservative partially hyperbolic diffeomorphisms, beginning with the program
of Pugh and Shub \cite{PS97,PS00} and its connection with accessibility. This
led to a large body of work establishing ergodicity and stable ergodicity under
various accessibility, bunching and center hypotheses; see, among many others,
\cite{BW10,RHRHU08,ACW21}.

By comparison, stable ergodicity for conservative endomorphisms has received
much less attention. A recent exception is \cite{ACS25}, where large open
families of conservative non-uniformly hyperbolic endomorphisms of $\T^2$ were
constructed and, under a natural homotopical condition, shown to be stably
ergodic. Those examples have Lyapunov exponents of opposite signs. The systems
constructed here lie in the complementary regime in which all Lyapunov
exponents are positive.

\subsection*{Use of AI tools}
OpenAI's GPT models 5.6 and 6.0 were used for revision, improving organization, and
smoothing of the text. They also palyed a crucial role in the development of a suitable
example in dimension three (Example~\ref{ex:3D}) and in the proof of
Lemma~\ref{lem:density_of_A_transvections}. The author takes full responsibility
for the mathematical content and the final text.


\section{Transvections and analytic shears}

In this section we recall some facts about transvections and use them to
construct  real-analytic diffeomorphisms of tori, 
homotopic to the identity, whose derivatives are
transvections at every point. Thise "analytic shears" provide the main 
tools in the proofs of  Theorems~\ref{thm:integrated_squeeze} and
\ref{thm:ergodic_mexpanding}.

\subsection{Transvections}

A \emph{transvection} (or \emph{shear map}) on $\R^d$ is an
isomorphism $T:\R^d\to\R^d$ such that $\image(T-I)$ is a
one-dimensional subspace of $\ker(T-I)$. In other words,
\begin{enumerate}
    \item $\rank(T-I)=1$ and
    \item $(T-I)^2=0$.
\end{enumerate}
Equivalently, a transvection can be written as
\[
T=I+uv^T,
\]
where $u,v\in\R^d$ are nonzero orthogonal vectors.
Indeed, $\image(T-I)=\langle u\rangle$ and
$\ker(T-I)=v^\perp$.
Thus $T$ fixes the hyperplane $v^\perp$ pointwise
and shears in the direction of $u$.

In this work, it is convenient to consider the identity as a
transvection, even though it does not satisfy the definition above.

Standard examples of transvections are matrices of the form
\[
T_{(i,j,t)}=I+tE_{i,j}
\]
for $1\leq i,j\leq d$ with $i\neq j$ and $t\in\R$.
We denote the set of all such matrices by $\ET$ and refer
to them as \emph{elementary transvections}.

Let $\GT$ be the set of all transvections on $\R^d$.
We refer to the elements of $\GT$ as \emph{general transvections},
in contrast to the elementary transvections in $\ET$.
It follows immediately from the definition that $\GT$ is closed
under conjugacy by elements of $\GL(d,\R)$.
Conversely, every transvection is conjugate to an elementary one
by an element of $\GL(d,\R)$.

Indeed, given any nontrivial $T\in\GT$, let $M\in\GL(d,\R)$
be such that
\[
\langle Me_1\rangle=\image(T-I),
\qquad
\langle Me_1,\ldots,Me_{d-1}\rangle=\ker(T-I).
\]
Since $(T-I)Me_d$ and $Me_1$ are nonzero vectors in
$\image(T-I)$, there exists $t\in\R$ such that
\[
(T-I)Me_d=tMe_1.
\]
Reorganizing, we get
\[
TMe_d=M(I+tE_{1,d})e_d.
\]
On the other hand, for $1\leq i\leq d-1$,
\[
M(I+tE_{1,d})M^{-1}Me_i
=M(I+tE_{1,d})e_i=Me_i.
\]
Thus
\[
T=M\Sigma_tM^{-1},
\qquad
\Sigma_t=I+tE_{1,d}.
\]

We have therefore established that
\[
\GT=\{MTM^{-1}:T\in\ET,\ M\in\GL(d,\R)\}.
\]
In fact, the conjugating matrix can be chosen orthogonal.
Indeed, using the representation $T=I+uv^T$ with
$u\perp v$, we may choose an orthonormal basis whose
first and last vectors are $u/\|u\|$ and $v/\|v\|$,
respectively. Hence
\[
\GT=\{QTQ^{-1}:T\in\ET,\ Q\in\SO(d,\R)\}.
\]

Let us prove some useful properties of transvections.

\begin{proposition}\label{prop:properties_transvections}
Let $\varphi:\R\to[1,\infty)$ be defined by
\[
\varphi(t)=\|\Sigma_t\|,
\]
and let $\psi:[1,\infty)\to[0,\infty)$ be the inverse
of the restriction of $\varphi$ to $[0,\infty)$.
Then:
\begin{enumerate}
    \item \label{phi_formula}
    For every $t\in\R$,
    \[
    \varphi(t)=\frac{|t|+\sqrt{t^2+4}}{2}.
    \]

    \item \label{phiinverse_formula}
    For every $t\geq1$,
    \[
    \psi(t)=t-\frac1t.
    \]

    \item \label{norm_conorm_difference}
    For every $t\in\R$,
    \[
    \sigma_+(\Sigma_t)-\sigma_-(\Sigma_t)=|t|.
    \]

    \item \label{transvections_orthogonal_change}
    Given transvections $S,T\in\GT$, we have
    $\|S\|=\|T\|$ if and only if there exists
    $Q\in\Ort(d,\R)$ such that $S=QTQ^{-1}$.
    When $d\geq3$, $Q$ may be chosen in $\SO(d,\R)$.

    \item \label{tranvections_parameter_value}
    Let $P\in\SL(d,\R)$ and set
    \[
    \kappa=\|Pe_1\|\cdot\|P^{-T}e_d\|.
    \]
    Then, for every $t\in\R$,
    \[
    \|P\Sigma_tP^{-1}\|=\|\Sigma_{\kappa t}\|.
    \]
\end{enumerate}
\end{proposition}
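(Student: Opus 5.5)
The plan is to reduce everything to the $2\times2$ block $\begin{pmatrix}1&t\\0&1\end{pmatrix}$ acting on $\langle e_1,e_d\rangle$, since $\Sigma_t$ is the identity on the orthogonal complement of that plane.

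\emph{Items (1)--(3).} On the plane, $\Sigma_t^T\Sigma_t$ has trace $2+t^2$ and determinant $1$. Its eigenvalues $\sigma_\pm^2$ therefore satisfy $\sigma_+\sigma_-=1$ and $\sigma_+^2+\sigma_-^2=2+t^2$. Hence $(\sigma_+-\sigma_-)^2=t^2$, which gives (3). It also gives $(\sigma_++\sigma_-)^2=t^2+4$, and adding the two relations yields $\sigma_+=(|t|+\sqrt{t^2+4})/2$. On the complement the singular values equal $1$, and $\sigma_+\ge1\ge\sigma_-$, so $\sigma_+$ is the full operator norm; this is (1). For (2), I solve $s=\varphi(t)$ with $t\ge0$. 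From $\sigma_+-\sigma_+^{-1}=|t|$ we get $t=s-1/s$ directly. Monotonicity of $\varphi$ on $[0,\infty)$ is evident from the formula.

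\emph{Item (4).} Write $S=Q_1\Sigma_aQ_1^{-1}$ and $T=Q_2\Sigma_bQ_2^{-1}$ with $Q_i\in\SO(d,\R)$, using the orthogonal conjugacy already established. Orthogonal conjugation preserves the norm, so $\|S\|=\|T\|$ if and only if $\varphi(a)=\varphi(b)$. By (1) this holds if and only if $|a|=|b|$. If $a=-b$, then $\Sigma_{-b}=D\Sigma_bD^{-1}$ with $D=\operatorname{diag}(-1,1,\dots,1)$, which is orthogonal. Composing the conjugations gives $Q\in\Ort(d,\R)$. The converse direction is trivial.

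For $d\ge3$ I replace $D$ by $\operatorname{diag}(-1,-1,1,\dots,1)$, negating $e_1$ and $e_2$. This has determinant $1$ and still sends $\Sigma_b$ to $\Sigma_{-b}$, because only the entries $(1,d)$ are affected and $e_2\ne e_d$ when $d\ge3$. If $\det$ of the resulting $Q$ is $-1$, I fix it by composing with a reflection commuting with $\Sigma_a$, for instance negating $e_2$. The identity case ($t=0$) is trivial.

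\emph{Item (5).} This is the main computation. We have $P\Sigma_tP^{-1}=I+t(Pe_1)(P^{-T}e_d)^T$, since $E_{1,d}=e_1e_d^T$. Set $u=Pe_1$ and $v=P^{-T}e_d$. Then $u\perp v$, because $v^Tu=e_d^TP^{-1}Pe_1=0$. So this is a transvection $I+t\,uv^T=I+t\kappa\,\hat u\hat v^T$, where $\hat u,\hat v$ are unit vectors. Choosing an orthonormal basis with first vector $\hat u$ and last vector $\hat v$ shows it is orthogonally conjugate to $\Sigma_{\kappa t}$, which gives the norm equality.

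The only subtle point is the sign and orientation bookkeeping in (4) for $d\ge3$; everything else is direct computation.
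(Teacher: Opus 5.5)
Your proposal is correct and follows essentially the same route as the paper: the trace/determinant computation on the $2\times 2$ block for items (1)--(3), conjugating by an orthogonal matrix with a column sign flip (your matrix $D$) for (4), and the rank-one identity $P\Sigma_tP^{-1}=I+t(Pe_1)(P^{-T}e_d)^T$ together with an adapted orthonormal basis for (5). The differences are cosmetic: you get (1) directly from $\sigma_+\pm\sigma_-$ rather than by inverting (2), and your final determinant correction in (4) is never triggered, because $Q_1 D' Q_2^{-1}$ already lies in $\SO(d,\R)$.
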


\begin{proof}
We first prove \eqref{norm_conorm_difference}.
The squares of $\sigma_+(\Sigma_t)$ and
$\sigma_-(\Sigma_t)$ are the eigenvalues of the
nontrivial $2\times2$ block of $\Sigma_t^T\Sigma_t$.
The characteristic polynomial of this block is
\[
\lambda^2-(2+t^2)\lambda+1.
\]
Hence
\[
\begin{aligned}
(\sigma_+(\Sigma_t)-\sigma_-(\Sigma_t))^2
&=\lambda_+-2+\lambda_-\\
&=2+t^2-2=t^2,
\end{aligned}
\]
proving \eqref{norm_conorm_difference}.

For $t\geq0$, we obtain
\[
t=\varphi(t)-\frac1{\varphi(t)}.
\]
This gives \eqref{phiinverse_formula}, and inverting
the formula yields \eqref{phi_formula} for $t\geq0$.
Since $\|\Sigma_{-t}\|=\|\Sigma_t\|$, the formula
holds for all $t\in\R$.

The ``if'' part of \eqref{transvections_orthogonal_change}
is obvious. For the ``only if'' part, first suppose
that $S$ and $T$ are nontrivial.
Let $H_1$ be the span of $e_1$ and $H_{d-1}$ the span
of $e_1,\ldots,e_{d-1}$.
Choose $U,V\in\SO(d,\R)$ such that
\begin{itemize}
    \item $UH_1=\image(S-I)$ and
    $UH_{d-1}=\ker(S-I)$;
    \item $VH_1=\image(T-I)$ and
    $VH_{d-1}=\ker(T-I)$.
\end{itemize}
Then
\[
U^{-1}SU=\Sigma_s,
\qquad
V^{-1}TV=\Sigma_t
\]
for some $s,t\in\R$.
Since $\|\Sigma_s\|=\|\Sigma_t\|$, we have $s=\pm t$.

If $s=t$, then $Q=UV^{-1}$ does the job.
If $s=-t$, change the sign of the first column
of $U$. The resulting orthogonal matrix, still
denoted by $U$, satisfies
\[
U^{-1}SU=\Sigma_t.
\]
When $d\geq3$, we may also change the sign of
one of the intermediate columns, so that
$U\in\SO(d,\R)$. Thus $Q=UV^{-1}$ has the
required properties.

The case where $S$ or $T$ is the identity is immediate,
since equality of their norms then implies that
both are the identity.

Finally, to prove \eqref{tranvections_parameter_value},
write
\[
P\Sigma_tP^{-1}=I+tuv^T,
\qquad
u=Pe_1,\quad v=P^{-T}e_d.
\]
Since
\[
\langle u,v\rangle=e_1^Te_d=0,
\]
we may choose an orthonormal basis whose first and
last vectors are $u/\|u\|$ and $v/\|v\|$,
respectively. The corresponding orthogonal matrix
$Q$ satisfies
\[
Q^{-1}(I+tuv^T)Q=\Sigma_{\kappa t},
\qquad
\kappa=\|u\|\|v\|.
\]
Consequently,
\[
\|P\Sigma_tP^{-1}\|=\|\Sigma_{\kappa t}\|.
\]
\end{proof}

\begin{remark}
The constant $\kappa$ in
Proposition~\ref{prop:properties_transvections}
has a geometric interpretation.
Writing $p_i=Pe_i$ for the columns of $P$, we have
\[
\|P^{-T}e_d\|
=
\|p_1\wedge\cdots\wedge p_{d-1}\|,
\]
where the right-hand side is the $(d-1)$-dimensional
volume of the parallelepiped spanned by the first
$d-1$ columns of $P$.

Indeed, let $q=P^{-T}e_d$. Then
\[
\langle q,p_i\rangle
=\langle e_d,P^{-1}p_i\rangle
=\delta_{d,i}.
\]
Thus $q$ is perpendicular to $p_1,\ldots,p_{d-1}$
and $\langle q,p_d\rangle=1$.

Since $\det P=1$, the volume of the parallelepiped
generated by the columns of $P$ is one.
Computing this volume as base times height gives
\[
1=
\|p_1\wedge\cdots\wedge p_{d-1}\|
\frac{|\langle q,p_d\rangle|}{\|q\|}
=
\frac{\|p_1\wedge\cdots\wedge p_{d-1}\|}{\|q\|}.
\]
Consequently,
\[
\kappa
=
\|p_1\|\,
\|p_1\wedge\cdots\wedge p_{d-1}\|.
\]
In particular, when $d=2$, we have
$\kappa=\|p_1\|^2$.
\end{remark}

We will be particularly interested in transvections
obtained by conjugating elementary transvections
by integer matrices. Let
\[
	\ST=\{PTP^{-1}:T\in\ET,\ P\in\SL(d,\Z)\}.
\]

The relevance of this class is that its elements can
be approximated by derivatives of real-analytic
diffeomorphisms of $\Td$, as we shall see in the
next subsection.

\subsection{Analytic shears}

Let $Z_c:\T \to \R$ ($Z$ for zig-zag) be the function which, upon identifying
$\T$ with $[-\half, \half]$, is defined by
\begin{equation}
	Z_c(x) = \begin{cases}
		 x+\half, \quad & -\half \leq x \leq -\frac{c}{2} \\
		 (1-\frac{1}{c})x, \quad  & |x| < \halfc \\
		 x-\half, \quad  &\halfc \leq x \leq \half
	\end{cases}.
\end{equation}

We wish to smoothen $Z_c$ by a sequence of smooth (analytic) functions
converging to $Z_c$ in $C^0$ and also in $C^1$ outside a neighborhood 
of $\pm c/2$. A convenient way to do this explicitly is by using Ces\`{a}ro averaged Fourier series.
More precisely, denote by
\begin{equation}
	b_n = 2 \int_{\T} Z_c(x) \sin(2 \pi n x) \ dx
\end{equation}
the $n$-th
Fourier coefficient for the (odd) function $Z_c$ and let $s_{c,n}:\T \to \R$
be the map
\begin{equation*}
	s_{c,n} (x) = \frac{1}{n} \sum_{k=1}^{n-1} \sum_{\ell=1}^k 
	b_{\ell} \sin(2 \pi \ell x).
\end{equation*}
This is  in fact the convolution of $Z_c$ with the
Fej\'{e}r kernel
\begin{equation} \label{eq:fejer_kernel}
	F_n(x) = \frac{1}{n} \left( \frac{\sin(n \pi x)}{\sin(\pi x)} \right)^2,
\end{equation}
	which is non-negative.
As a consequence of this, $s_{c,n}$ converges to $Z_c$
uniformly, and $s'_{c,n}$ converges to $Z_c'$ uniformly outside any neighborhood
of $\pm c/2$. More precisely:

\begin{lemma} \label{lem:sprime_convolution} 
	Let $0<c_0 < c< 1$. Then
	\begin{enumerate}[label=(\roman*)]
		\item 	$1-\frac{1}{c_0} \leq s_{c_0,n}'(x) \leq 1$
			\label{est:whole_circle} for every $x \in \T$ and
		\item $s_{c_0,n}'(x) \geq 1-\frac{1}{ n c_0 (c-c_0)}$ 
			for every $x \notin (-\frac{c}{2},\frac{c}{2})$. \label{est:far_from_crit}
	\end{enumerate}
\end{lemma}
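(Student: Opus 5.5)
We write $s_{c_0,n}=F_n*Z_{c_0}$, with $F_n$ the Fej\'er kernel \eqref{eq:fejer_kernel}, which is non-negative with $\int_\T F_n=1$. The idea is to differentiate in the distributional sense. As a periodic function, $Z_{c_0}$ is piecewise linear: its slope is $1$ on $\T\setminus[-\frac{c_0}{2},\frac{c_0}{2}]$ and $1-\frac1{c_0}$ on $(-\frac{c_0}{2},\frac{c_0}{2})$. At the points $\pm c_0/2$ it has no jumps, since $(1-\frac1{c_0})\frac{c_0}{2}=\frac{c_0}{2}-\frac12$. At $x=\pm\frac12$ the two outer pieces agree, both giving $0$. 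So $Z_{c_0}$ is a continuous, Lipschitz function on $\T$, and its a.e. derivative $g=Z_{c_0}'$ takes only the values $1$ and $1-\frac1{c_0}$. Convolution commutes with differentiation for Lipschitz functions, hence
\[
s_{c_0,n}'(x)=\int_\T F_n(x-y)\,g(y)\,dy .
\]

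For part \ref{est:whole_circle}, this formula exhibits $s_{c_0,n}'(x)$ as an average of $g$ against the probability density $F_n(x-\cdot)$. Since $1-\frac1{c_0}\le g\le 1$, the same bounds hold for the average.

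For part \ref{est:far_from_crit}, write $g=1-\frac1{c_0}\mathbf 1_{I}$ with $I=(-\frac{c_0}{2},\frac{c_0}{2})$. Then
\[
s_{c_0,n}'(x)=1-\frac1{c_0}\int_I F_n(x-y)\,dy .
\]
Take $x\notin(-\frac c2,\frac c2)$ and $y\in I$. Then the distance on $\T$ from $x-y$ to $0$ is at least $\frac{c-c_0}{2}$. The standard bound on the Fej\'er kernel is $F_n(z)\le \frac{1}{n\sin^2(\pi z)}$, and $|\sin(\pi z)|\ge 2|z|$ for $|z|\le\frac12$. Together these give
\[
F_n(x-y)\le \frac{1}{n\cdot 4\left(\frac{c-c_0}{2}\right)^2}=\frac{1}{n(c-c_0)^2}.
\]
Integrating over $I$, which has length $c_0$, yields
\[
s_{c_0,n}'(x)\ge 1-\frac{1}{n(c-c_0)^2}.
\]

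The main obstacle is matching the exact constant in \ref{est:far_from_crit}, namely $1-\frac{1}{nc_0(c-c_0)}$. This is where I expect trouble, and I would be prepared to adjust the argument. The crude bound above is off by a factor $\frac{c_0}{c-c_0}$. It is sharper than the stated bound exactly when $c_0\ge c-c_0$, i.e. when $c\le 2c_0$, so in that regime the claim follows immediately.

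For the general case I would instead integrate $F_n$ exactly over the complement of a neighbourhood of $0$. From $F_n(z)\le \frac{1}{4nz^2}$ we get
\[
\int_{|z|\ge\delta}F_n\le\frac{1}{2n\delta},
\]
and with $\delta=\frac{c-c_0}{2}$ this gives $\int_I F_n(x-\cdot)\le \frac{1}{n(c-c_0)}$. The factor $\frac1{c_0}$ from $g$ then produces precisely $\frac{1}{nc_0(c-c_0)}$. This tail-integral estimate is the version I would write up, since it yields the stated constant without any restriction relating $c$ and $c_0$.
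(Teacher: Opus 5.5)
Your final argument is correct and is essentially the paper's proof: write $s_{c_0,n}'=1-\frac{1}{c_0}F_n\star\chi_{I}$ with $I=(-\frac{c_0}{2},\frac{c_0}{2})$, get (i) from $F_n\geq 0$ and $\int_{\T}F_n=1$, and get (ii) by bounding $F_n\star\chi_I(x)$ by the tail $\int_{|z|\geq (c-c_0)/2}F_n(z)\,dz<\frac{1}{n(c-c_0)}$, using $F_n(z)\leq\frac{1}{4nz^2}$. There is one small slip in the aside you discarded: the crude pointwise bound $1-\frac{1}{n(c-c_0)^2}$ implies the stated one when $c\geq 2c_0$ (i.e.\ $c_0\leq c-c_0$), not when $c\leq 2c_0$, but this does not affect the tail-integral proof you chose to write up.
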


\begin{proof}
	Let $\chi$ be the index funcition on $I_{c_0}= (-c_0/2, c_0/2)$. 
	Note that we can write the derivative of $Z_{c_0}(x)$ as
	\begin{equation} \label{Zprime}
		Z_{c_0}'(x) = 1- \frac{1}{c_0} \chi(x) 
	\end{equation}
	so we have 
	\begin{equation} \label{eq:sprime_convolution}
		s_{c_0,n}'(x) = F_n \star Z_{c_0}'(x) = 1 -\frac{1}{c_0} F_n
		\star \chi(x).
	\end{equation}
	Since $F_n$ is nonnegative and has integral one, 
	\[ 
	0 \leq F_n \star \chi (x) \leq 1
	\]
	for every $x \in \T$. That proves \ref{est:whole_circle}.

	Now suppose that $x \notin I_c$.
	Using concavity of $\sin(\pi x)$ on $[0, \half]$ we see that 
	$|\sin(\pi x)| \geq 2|x|$ on $[-\half, \half]$. Combining that with
	\eqref{eq:fejer_kernel} gives the estimate 
	\begin{equation} \label{eq:fejer_estimate}
		F_n(x) \leq \frac{1}{4 n x^2}
	\end{equation}
	in that interval. 
	If $y \in I_{c_0}$, then
	\[
		|x-y|_{\T} \geq \frac{c-c_0}{2}.
	\]
	Consequently,
	\begin{align*}
		F_n \star \chi 
		& = \int_{I_{c_0}} F_n(x-y)\ dy  \\
		& \leq \int_{|t| \geq (c-c_0)/2} F_n(t) \ dt \\
		& \leq 2 \int_{(c-c_0)/2}^{1/2} \frac{ dt}{4nt^2 }
		< \frac{1}{n(c-c_0)}.
	\end{align*}
	Using \eqref{eq:sprime_convolution} we obtain
	\[s_{c_0,n}'(x) \geq 1 - \frac{1}{n c_0 ( c- c_0)}\]
	as required.

\end{proof}

Now let $S_{i,j, c,n,t}: \Td \to \Td$ be the map
\begin{equation*}
	S_{i,j,c,n,t}(x_1, x_2, \ldots, x_d) = (x_1, \ldots, x_i+t s_{c,n}(x_j), \ldots, x_d).
\end{equation*}
Then
\begin{equation} \label{eq:smooth_shears}
	DS_{i,j,c,n,t}(x_1, \ldots, x_d) = I_d + t s_{c,n}'(x_j) E_{i,j}.
\end{equation}
In other words, $DS_{i,j,c,n,t}(x_1, \ldots, x_d)$ is close to $T_{(i,j,t)}$ for
$\frac{c}{2}+\epsilon< x_j < 1-\frac{c}{2}-\epsilon$ and close to 
$T_{(i,j,t(1-\frac{1}{c}))}$
for $-\frac{c}{2}+\epsilon < x_j < \frac{c}{2}-\epsilon$ when $n$ is large.

Let
\begin{equation*}
	\cE = \{S_{i,j, c,n,t} : 1 \leq i , j \leq d,\  
		i \neq j, \ 0<c<1, \ n \in \N, \  t \in \R \},
\end{equation*}
and denote by $\langle \cE \rangle \subset \Diff_m^{\omega}(\Td)$ the group of conservative real
analytic diffeomorphisms of $\Td$ generated by $\cE$.


\section{Squeezing Lyapunov exponents} 
\label{sec:many_transvections}

We can now state a more complete version of
Theorem~\ref{thm:ergodic_mexpanding}.
\begin{theorem} \label{thm:nue}
	Given any $A \in \Mdd$ with $|\det A| \geq 2$, and any
	$\epsilon>0$, there exists $h \in  \langle \cE \rangle $ 
	such that $f = A \circ h$ is non-uniformly expanding on a set of
	Lebesgue measure at least $1-\epsilon$. Moreover,
	\begin{equation}
		\left| \Lambda_i(f) -\log| \det A |^{ \frac{1}{d}}  \right| < \epsilon
	\end{equation}
	for every $1 \leq i \leq d$.

\end{theorem}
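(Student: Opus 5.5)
Let $\delta=\frac1d\log|\det A|>0$. The idea is to build $h\in\langle\cE\rangle$ whose derivative, on most of $\Td$, is close to a fixed matrix $B$ for which $A B$ has all singular values close to $|\det A|^{1/d}$. If $B=P\Sigma$ with $A P$ conformal, then the exponents of $f$ are forced close to $\delta$. Concretely, write $A=|\det A|^{1/d}\,O\,U$ with $O$ orthogonal and $U\in\SL^{\pm}(d,\R)$, and aim for $Dh\approx U^{-1}$ on a set of large measure. Then $Df=A\,Dh\approx |\det A|^{1/d}O$ there, and
\[
\log\|Df\|,\ \log\m(Df)\approx\delta
\]
pointwise on that set. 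Since $f$ is a composition of conservative maps with $A$, it preserves $m$ with constant Jacobian $|\det A|$, so $\sum_i\Lambda_i(f)=d\delta$ exactly.

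The construction of $h$ has three steps.

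\begin{enumerate}
\item \emph{Factor $U^{-1}$ into transvections.} $\SL(d,\R)$ is generated by elementary transvections, so $U^{-1}=T_1\cdots T_k$ with $T_\ell\in\ET$. If $\det U=-1$, first absorb a reflection into $O$.

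\item \emph{Realize each factor by a shear.} By \eqref{eq:smooth_shears} and Lemma~\ref{lem:sprime_convolution}\ref{est:far_from_crit}, for $c$ close to $0$ (with $c_0<c$) and $n$ large, the map $S_{i,j,c_0,n,t}$ has derivative within $\eta$ of $T_{(i,j,t)}$ outside the slab $\{|x_j|<c/2\}$. Lemma~\ref{lem:sprime_convolution}\ref{est:whole_circle} bounds the derivative everywhere by a constant depending on $c_0$ and $t$.

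\item \emph{Compose.} Set $h=S_k\circ\cdots\circ S_1$, with $S_\ell$ realizing $T_\ell$. The bad set for $S_\ell$ pulled back under $S_{\ell-1}\circ\cdots\circ S_1$ has measure at most $c$, because each map preserves $m$. So $Dh$ is within $O(\eta)$ of $U^{-1}$ outside a set $E$ with $m(E)\le kc$.
\end{enumerate}

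It then remains to pass from this pointwise information to Lyapunov exponents and non-uniform expansion. On $E$ the quantities $\log\|Df\|$ and $\log\m(Df)$ are bounded below and above by constants $-C,C$. These constants depend on $c_0$, so $c_0$ is fixed first, then $c$ and $n$. We then have
\[
\Lambda_d(f)\le\int\log\|Df\|\,dm\le\delta+O(\eta)+Ckc,
\]
and, by subadditivity applied to $Df^{-1}$,
\[
\Lambda_1(f)\ge\int\log\m(Df)\,dm\ge\delta-O(\eta)-Ckc.
\]

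This is the main obstacle. The conorm bound on $E$ must not blow up as $c\to0$, yet $s_{c_0,n}'$ near the centre is about $1-1/c_0$, which is large. The natural fix is to keep $c_0$ fixed and let only $c\to c_0$ from above. But then the bad slab has width about $c_0$, not small. So I will instead take $c_0$ small and accept a conorm lower bound $\m(Df)\ge e^{-C(c_0)}$ on $E$. To make $C(c_0)\,m(E)$ small, I use that within the slab the shear factor is again a transvection, of parameter $t(1-1/c_0)$. Its log-norm grows only like $\log(|t|/c_0)$, so $C(c_0)\sim k\log(1/c_0)$, while $m(E)\lesssim kc_0$. The product $c_0\log(1/c_0)$ tends to $0$, which gives both integrated estimates within $\epsilon$.

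For non-uniform expansion on a set of measure at least $1-\epsilon$, apply Birkhoff's theorem to $\phi=-\log\m(Df)=\log\|Df^{-1}\|$. We have $\int\phi\,dm<0$, and $\phi\le -\delta/2$ off $E$. The ergodic decomposition together with the bound $\int\phi<-\delta+o(1)$ shows that the set where the Birkhoff average of $\phi$ is nonnegative has measure $o(1)$, using $\phi\le Ck$ and Markov's inequality. On the complementary set, which has measure at least $1-\epsilon$, condition \eqref{eq:def_nue} holds.
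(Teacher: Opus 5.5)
Your proposal is correct and is essentially the paper's proof. You factor $|\det A|^{-1/d}A$ (after absorbing a reflection) into elementary transvections, realize each factor by a shear $S_{i,j,c_0,n,t}$ with $c_0$ comparable to $c$, bound the critical set by $kc$ via measure preservation, bound $\log\m(Df)$ below by $-O(k\log(1/c))$ there, and conclude from $c\log c\to0$ followed by Birkhoff and Markov. One slip to fix: with $U^{-1}=T_1\cdots T_k$ you need $h=S_1\circ\cdots\circ S_k$, as the paper does, not $S_k\circ\cdots\circ S_1$; and the Markov step is most direct via the paper's pointwise bound $\log\m(Df)\le\delta$, which makes $\delta-\log\m(Df)$ nonnegative with integral $o(1)$.
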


Note that Theorem \ref{thm:nue} doesn't say anything about the ergodic
decomposition of Lebesgue measure. However,  we may apply \cite[Corollary
D]{ABV} to conclude that there is a set $H$,
on which $f$ is non-uniformly expanding, is open modulo zero
Lebesgue measure. Moreover, it is a
(possibly countable) union of positive Lebesuge
measure ergodic components, each of which is open  modulo zero. 
In fact, if $d=2$ and $A$ is
either expanding or hyperbolic, then, by the transitivity result in
\cite{And16}, there can be at most one such component.
That is, $f \vert H$ is ergodic.

The proof of Theorem~\ref{thm:nue} is based on the classical fact that any
(orientation-preserving) unimodular matrix is a product of standard
transvections. That
is:
\begin{lemma} \label{lem:product_of_standard_shears}
	Every $B \in \SL(d,\R)$ can be written as a product of 
	 elements of $\ET$. 
 \end{lemma}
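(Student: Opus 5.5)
We prove Lemma~\ref{lem:product_of_standard_shears} by Gaussian elimination, using only elementary row operations of the form ``add $t$ times row $j$ to row $i$''. Left multiplication by $T_{(i,j,t)}=I+tE_{i,j}$ performs exactly this operation. The plan is to show that some product $T$ of elements of $\ET$ satisfies $TB=I$. Then $B=T^{-1}$, and since $T_{(i,j,t)}^{-1}=T_{(i,j,-t)}\in\ET$, the inverse $T^{-1}$ is again a product of elements of $\ET$.

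The elimination runs column by column. Fix $B\in\SL(d,\R)$ with $d\geq2$. The first column of $B$ is nonzero, since $\det B=1$.

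\emph{Making the $(1,1)$ entry equal to $1$.} If $b_{11}=0$, choose $k$ with $b_{k1}\neq0$ and add row $k$ to row $1$, which makes $b_{11}\neq0$. Next pick any $k\neq1$ and add $(1-b_{k1})/b_{11}$ times row $1$ to row $k$, so that the new $b_{k1}$ equals $1$. Then add $(1-b_{11})$ times row $k$ to row $1$, which makes $b_{11}=1$.

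\emph{Clearing the first column.} Subtract $b_{i1}$ times row $1$ from each row $i\neq1$. The first column becomes $e_1$.

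\emph{Clearing the first row and inducting.} Right multiplication by elements of $\ET$ performs column operations, so we could use these to clear the rest of the first row. It is simpler to continue with row operations only and reduce to the lower-right $(d-1)\times(d-1)$ block $B'$. This block has determinant $1$, because the row operations preserve the determinant. Repeating the procedure on rows and columns $2,\dots,d$ uses only elementary transvections that involve those indices, and it never disturbs column $1$, which stays equal to $e_1$.

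When the induction terminates, the result is an upper unitriangular matrix. The $\det=1$ condition is used exactly at the last step: there the remaining $1\times1$ block is $(1)$ automatically, so no scaling is needed. Finally, clear the entries above the diagonal from the bottom row upward. For each $j$ from $d$ down to $2$ and each $i<j$, subtract $u_{ij}$ times row $j$ from row $i$. This yields $I$.

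The only point needing care is the normalization of a pivot to $1$ without using scaling matrices, and the two-step trick above handles it whenever $d\geq2$. Elsewhere the argument is routine bookkeeping. If $B$ is already a transvection or the identity, the statement is trivial, since the empty product is $I$.
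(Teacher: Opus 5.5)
Your proof is correct and is the same argument as the paper's, whose proof consists only of the words ``Gaussian elimination.'' You have supplied the details the paper omits: the two-step trick that normalizes each pivot to $1$ without diagonal scaling matrices, and the use of $\det B=1$ to force the final $1\times1$ block to equal $1$. (One small fix: your closing remark should say ``elementary transvection'' rather than ``transvection,'' since a general transvection need not lie in $\ET$; the remark plays no role in the argument.)
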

 \begin{proof} Gaussian elimination.
 \end{proof}

\begin{remark}
The number of thransvections needed in
Lemma~\ref{lem:product_of_standard_shears} is of order $d^2$. That is not a
problem here, but it is too much when trying to give examples of collectively
expanding maps. This problem will be dealt with in
Section~\ref{sec:fewer_transvections}, where we show that any element of
$\SL(d,\R)$ can be written as the product of an orthogonal matrix and only $d-1$
general transvections.
\end{remark}

\begin{proof}[Proof of Theorem \ref{thm:nue}] 
	Let $\rho = \sqrt[d]{|\det A|}$.
	To prove the Theorem, it sufficies to find $h \in \langle \cE \rangle$
	so that $f = A\circ h$ satisfies 
	\begin{equation}
		\int_{\Td} \log \m(Df(x)) \ dm(x) >  \log \rho -\epsilon^2.
	\end{equation}
	Indeed, in this case the function
	\[
		\Xi(x) = \log \rho  - \log \m(Df(x))
	\]
	is non-negative. This is because $\rho$ is the largest possible value of
	$\m(B)$ for
	any matrix $B$ with $|\det B| = |\det A|$. Let $\Xi^+$ be the
	Birkhoff averate of $\Xi$. Then
	\[\log \rho - \Xi^+(x) \leq \lambda_1(x,f) 
		\leq \ldots \leq \lambda_d(x,f)
	\]
	almost everywhere and Chebyshevs' inequality
	yealds
	\begin{equation}
		\m(\{x: \Xi^+(x) \geq \epsilon \}) 
		\leq \frac{1}{\epsilon} \int_{\Td} \Xi(x) \ d\m(x) 
		< \epsilon.
	\end{equation}
	In particular, the set on which $\Xi^+$ is smaller than $\epsilon$ has
	$m$-measure at least $1-\epsilon$ and, provided that $\epsilon< \log
	\rho$, $f$ is NUE on this set. 
 We have chosen $\rho$ so that $A/\rho \in \GL(d,\R)$. Let us first assume that
 $A/\rho$ is in $\SL(d,\R)$ and comment in the end how to proceed in the general
 case. Under this assumption, there are transvections $T_{(i_k, j_k, t_k)}$, $1\leq k \leq \ell$ such that
 \begin{equation} \label{eq:A_times_elementary_transvections}
	A T_{(i_1, j_1, t_1)} \cdots T_{(i_\ell, j_\ell, t_\ell)} 
	=  \rho I_d.
\end{equation}
	
Let $t_{\max} = \max\{|t_k|: 1 \leq k \leq \ell \}$ and set
\[L_c = \log \m(A) + \ell \log \left( \frac{c}{c+2 t_{max}} \right),\]
and
\[G = \log \rho - \frac{\epsilon^2}{2}.\]
As we will see later, $L_c$ and $G$ will be lower bounds on $\m(Df(x))$ in
"critical" and "non-critical" regions, respectively.
Next, fix $c>0$ 
small enough so that $L_c < G$ and
\begin{equation} \label{eq:def_de_c}
	c \ell L_c + (1-\ell c) G
	> \log \rho - \epsilon^2.
\end{equation}
This can be done because $c \log c$ tends to $0$ as $c \searrow 0$.

Now fix $c_0 = c/2$ and take 
\begin{equation}
	h(x) =  S_{i_1, j_1, c_0, n, t_1} \circ \ldots \circ S_{i_\ell,
j_\ell, c_0, n, t_\ell} (x),
\end{equation}
whith $n$ to be specified later.
Finally, set $f = A \circ h$.

For $1 \leq i \leq d$ write 
$\Delta_c^i = \{(x_1, \ldots, x_d) \in \Td : |x_i|<\frac{c}{2}\}$ (similarly for
$\Delta_{c_0}^i$. 
Let $D_1 = \Delta_{c}^{j_1}$ and, for $2\leq k \leq \ell$, define $D_k$ recursively as
\begin{equation}
	D_k = (S_{(i_{k}, j_{k}, c_0, n, t_{k})})^{-1}D_{k-1} \cup \Delta_c^{j_k}.
\end{equation}
Then $D_\ell$ consists of the points that fall into a critical region at some
point during the shear process. Note that $m(D_\ell) \leq c\ell$.
Note also that $D S_{i_k, j_k, c_0, n, t_k}(x)$ 
converges uniformly to $T_{( i_k, j_k, t_k )}$ outside $\Delta_c^{j_k}$.
Thus we can (and do) choose $n$ so that
	\begin{equation} \label{eq:noncritical_est}
	\log \m(D f(x)) > G
	\end{equation}
	for $x \notin D_\ell$.

For $x \in \Delta_c^{j_k}$ we have the global lower bound
\begin{align} \label{eq:critical_est}
	\m \left( D S_{i_k,j_k, c_0,n,t_k}(x)\right)
	& \geq \frac{1}{\varphi(t_k(1-1/c_0))} \geq
	\frac{1}{\varphi(t_k/c_0)}  \\
	& \geq \frac{1}{1+|t_k|/c_0} \geq
	\frac{c}{c+2 t_{\max}}.
\end{align}
Hence 
\begin{equation*}
	\log \m (D f(x)) \geq \log \m(A) + \ell \log\left(\frac{c}{c+2 t_{\max}}
\right) = L_c
\end{equation*}
for every $x \in D_{\ell}$ (actually, for every $x \in \Td$).

Using \eqref{eq:noncritical_est} and \eqref{eq:critical_est} together with
\eqref{eq:def_de_c} we get the estimate
\begin{align*}
	\int_{\Td} \log \m ( D f(x)) \ dm(x) 
	&= \int_{D_\ell^c} \log \m(D f(x)) \ dm(x) \\
	& +
	\int_{D_\ell} \log \m(D f(x)) \ dm(x) \\
				 & \geq (1-c \ell) \left(\log \rho -
				 \frac{\epsilon^2}{2}\right) 
				 + c \ell L_c  \\
				 &> \log \rho- \epsilon^2
\end{align*}
as required.

It remains to consider the case in which $A/\rho$ is not in $\SL(d,\R)$. In this
case we take any $Q \in \Ort(d) \setminus \SO(d)$ (for example, the matrix swapping
the first and second columns of the identity). Then $QA/\rho \in \SL(d,\R)$, so we
can choose transvections $T_(i_k, j_k, t_k)$, $1 \leq k \leq \ell$ satisfying 
 \begin{equation} 
	A T_{(i_1, j_1, t_1)} \cdots T_{(i_\ell, j_\ell, t_\ell)} 
	=  \rho Q^{-1}
\end{equation}
rather than \eqref{eq:A_times_elementary_transvections}. But since $\m(\rho Q^{-1}) = \m(\rho I) =
\rho$, all estimates on $\m(Df(x))$ remain the same.

\end{proof}

\begin{example} \label{ex:3111}
\begin{figure} 
	\centering
	\includegraphics[width=1.0\textwidth]{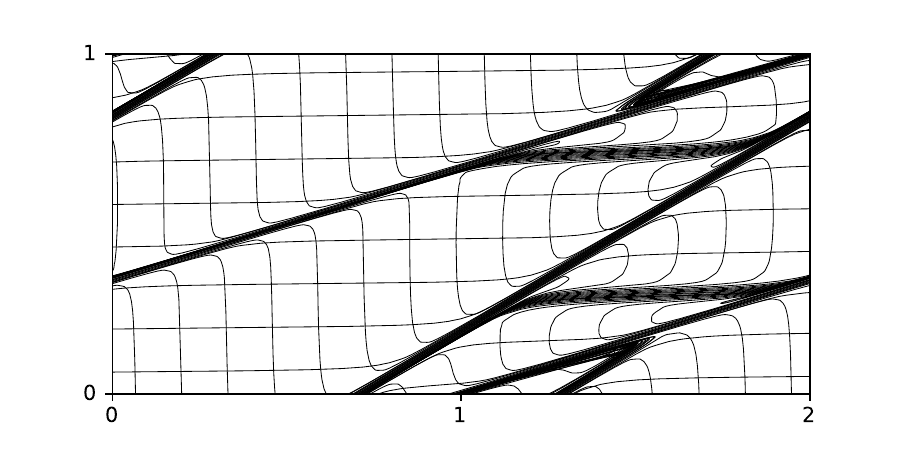}
	\caption{This picture shows the image  of the grid 
	$\R \times \frac{1}{10}\Z \cup \frac{1}{10}\Z \times \R$ 
under the lift of $f_{c,n}$ to $\RR$ with $c=1/10$ and $n = 25$.}
\end{figure}

We now consider an explicit example. 
The map
\[
A=
\begin{pmatrix}
3&1\\
1&1
\end{pmatrix}.
\]
has eigenvalues $2\pm \sqrt{2}$ and is therefeore an Anosov endomorphism.
A direct computation gives
\[
A
\begin{pmatrix}
1&-1+\sqrt{2}\\
0&1
\end{pmatrix}
\begin{pmatrix}
1&0\\
-\frac{\sqrt{2}}{2}&1
\end{pmatrix}
\begin{pmatrix}
1&-3+\sqrt{2}\\
0&1
\end{pmatrix}
=
\sqrt{2}\,I.
\]
Thus, in the notation of the proof of
Theorem~\ref{thm:nue},
\[
	(i_1, j_1) = (1,2), \qquad (i_2, j_2) =
	(2,1), \qquad (i_3, j_3) =
	(1,2),
\]
and
\[
t_1=-1+\sqrt{2},\qquad
t_2=-\frac{\sqrt{2}}{2},\qquad
t_3=-3+\sqrt{2}.
\]
Then it can be read from the proof of Theorem~\ref{thm:nue} that, for $c$ small
enough and sufficiently large $n$, the map
\[
f_{c,n}
=
A\circ S_{1,2,c,n,t_1}\circ S_{2,1,c,n,t_2}\circ S_{1,2,c,n,t_3}
\]
is non-uniformly expanding on a set of Lebesgue measure arbitrarily close to
$1$. Moreover both integrated Lyapunov exponents can be taken as close to $\frac{1}{2} \log 2$
as desired.
 
Let us be more explicit. For that, we need to fix some $0< \beta < \log \sqrt{2}$, 
say $\beta = \frac{1}{3} \log 2$. We then pick some $\beta < G< \log \sqrt{2}$, 
just below $\log \sqrt{2}$, say $G = \frac{9}{20} \log 2$.
We  want explicit values of $c$ and $n$ for which
\[
\Lambda_1(f_{c,n})>\beta.
\]
Since
\[
\m(A)=2-\sqrt{2},
\qquad
t_{\max}=3-\sqrt{2},
\]
the global lower bound used in the proof of Theorem~\ref{thm:integrated_squeeze} is
\[
L_c
=
\log(2-\sqrt{2})
+
3\log\left(
\frac{c}{c+2(3-\sqrt{2})}
\right).
\]

We need $c$ small enogh so that 
\[
3cL_c+(1-3c)G>\frac{1}{3}\log{2}
\]
holds. A possible choice is $c = 10^{-3}$. 

Now take $c_0 = c/2$ and  $\delta = 1/(n c_0(c-c_0)) = 4/(nc^2)$.
Then Lemma~\ref{lem:sprime_convolution} gives
\[
|s'_{c_0,n}-1|\leq\delta
\]
outside $I_c$.

Using the notation from the proof of Theroem~\ref{thm:nue}, $D_3$ is the
critical region, i.e. the set of points where $D f_{c,n}$ is not well
approximated by $\sqrt{2}I$. Fix some $x \notin D_3$ and write 
\[
	B_k = DS_{i_k, j_k, c, n, t_k}(x), \qquad T_k = T_{(i_k, j_k, t_k)}.
\]
Then
\[
	\|B_k - T_k \|\leq
		\frac{4|t_k|}{nc^2}
\]
and
\[ 
	\|B_k \| \leq \| T_k\|
\]
for $1 \leq k \leq 3$.
Hence
\begin{align*}
	Df_{c,n}(x)-\sqrt{2} I
&=
A(B_1-T_1)B_2B_3 \\
&\quad
+AT_1(B_2-T_2)B_3 \\
&\quad
+AT_1T_2(B_3-T_3).
\end{align*}
By the triangle inequality,
\[
	\|Df_{c,n}(x)-\sqrt{2} I\|
\leq C\delta,
\]
where
\[
C=
\|A\|
\left(
|t_1|\|T_2\|\|T_3\|
+
\|T_1\||t_2|\|T_3\|
+
\|T_1\|\|T_2\||t_3|
\right).
\]

For the present values of $t_1, t_2, t_3$ it is possible to verify that
$C<20$. Therefore
\[
	\|Df_{c,n}(x)-\sqrt{2} I\|<20\delta
\]
for every $x \notin D_3$.

Hence, in order for \eqref{eq:noncritical_est} to hold, it is enough to require
\[
	\sqrt{2}-20\delta>e^G.
\]
For $c=10^{-3}$ and $G=\frac9{20}\log2$, the smallest $n$ for which this
inequality holds is $n = 1405591731$. 
In particular $f_{10^{-3},\,2\cdot10^9}$ is an explicit real-analytic 
conservative endomorphism having its smallest integraged Lyepunov exponent in
$(\frac{1}{3}\log{2},
\frac{1}{2}\log{2})$. 
\end{example}


\section{Collectively expanding maps}

This section is dedicated to the proof of Theorem~\ref{thm:ergodic_mexpanding}.
The theorem makes two claims about collectively expanding maps: 
\begin{enumerate}
	\item they are almost everywhere non-uniformly expanding 
		in the sense of \eqref{eq:def_nue}, and
	\item they are ergodic.
\end{enumerate}

The proof of non-uniform expansion is rather straightforward, while ergodicity
is rather more involved.

\subsection{Non-uniform expansion}

Recall that the \emph{transfer operator} $\cL: C^0(\Td, \R) \to C^0(\Td, \R)$ is
given by
\[\cL \varphi (x) = \sum_{x' \in f^{-1}(x)} 
	\frac{\varphi(x')}{|\det Df(x')|}.\]
	In other words, \eqref{eq:mexpanding} is equivalent to say that the transfer
	operator of the function $\xi(x) = \log \m(Df(x))$ is strictly positive.

The proof of almost everywhere non-uniform expansion relies on the well-known
fact that the transfer operator is the dual of the Koopman operator on 
suitable spaces, say $L^2( m)$. In particular
\begin{equation} \label{eq:dual}
	\int (\cL \varphi) \cdot \psi \ dm = \int \varphi \cdot (\psi \circ f)\ dm
\end{equation}
for $\phi, \psi \in C^0(\Td, \R)$.

\begin{proof}[Proof of Theorem \ref{thm:ergodic_mexpanding}: non-uniform expansion] 
	Let
	\[
		\varphi(x) =
		\log \m(DF(x))
	\]
	and denote by 
	\[\tvarphi(x) = \lim_{k \to \infty} \frac{1}{k} \sum_{\ell =0}^{k-1}
	\varphi(F^{\ell}(x)), \]
	the Birkhoff average of $\varphi$ under $F$.
Let 
	\begin{equation} 
		c = \min_{x \in M} \sum_{x' \in F^{-1}(x)} 
		\frac{\varphi(x') }{ |\det DF(x')|}
	\end{equation}

	To prove that $f$ is non-uniformly expanding, it suffices to prove that 
	\begin{equation} \label{eq:tvarphi_at_least_c}
		\frac{1}{m(I)} \int_I \tvarphi \ dm \geq c
	\end{equation}
	for every $F$-invariant Borel measurable set $I \subset M$ of positive
	$m$-measure. That, in fact, implies that $\tvarphi \geq c$ $m$-almost
	everywhere. Indeed, the set where $\tvarphi$ is less than $c$ is
	$F$-invariant. If it were to have positive $m$-measure, it would have to violate
	\eqref{eq:tvarphi_at_least_c}.

	To this end, let $I$ be any  $F$-invariant Borel set, and denote by $\chi_I$ the index
	function at $I$. Invoking \eqref{eq:dual} with $\chi_I$ in place of
	$\psi$, we get
\begin{align}
	\int_I \tvarphi \ dm 
	& = \int_I \varphi \ dm 
	 = \int \varphi \cdot \chi_I \ dm \\
	& = \int \varphi \cdot \chi_I \circ F \ dm  
	 = \int \cL \varphi \cdot \chi_I \ dm  \\
	& = \int_I \left( \sum_{x' \in F^{-1}(x)} 
	\frac{\varphi(x')}{|\det DF(x')|} \right) \ dm \geq c \cdot m(I)
\end{align}
as required. That proves that $f$ is non-uniformly expanding.
\end{proof}

\subsection{The supermartingale property of backward dynamics}

Let $\bL_f$ be the inverse limit space of $f:M \to M$ and  $\hm$ the lift of
the Lebesgue measure to $\bL_f$. For $x \in M$, let $\Sigma_x =
\pext^{-1}(x)$ be the fiber over $x$ in $\bL_f$. Then $\Sigma_x$ is a cantor
set. For every $y \in f^{-n}(x)$ we define the $n$-cylinder $[y]_n = \{\hy \in
	\Sigma_x : y_n = y\}$. The collection $\{[y]_n : n \in \N \text{ and }y
	\in f^{-1}(x)\}$ generates the topology on $\Sigma_x$. Let 
$m_x$ be the unique measure on $\Sigma_x$ satisfying
\begin{equation*}
	m_x([y]_n) = |\det Df^n(y)|^{-1}
\end{equation*}
for every $n \in \N$ and $y \in f^{-n}(x)$.

Let $\cA_n$ be the $\sigma$-algebra on $\Sigma_x$ generated by $n$-cylinders and let
\begin{align*}
	X_n(\hx) &= \sum_{k=1}^{n} \log \| Df (x_k)^{-1}\| \\
		     &= - \sum_{k=1}^{n} \log \m (Df (x_k)).
\end{align*}
By convention we set $X_0 = 0$.

Clearly $X_n$ is $\cA_n$-measurable, as it depends only on the first $n$ entries
of $\hx$. 

We have
\begin{align*}
	\E (X_{n+1} \vert \cA_n ) (\hx)
	& = \sum_{y \in f^{-1}(x_n)} 
	\frac{\left( X_{n}(\hx) -\log \m(Df(y)) \right)}{| \det Df(y)|} \\
	& = X_n(\hx) - \sum_{y \in f^{-1}(x_n)} 
	\frac{\log \m(Df(y))}{|\det Df(y)|} \\
	& < X_n(\hx).
\end{align*}

In other words, $X_{n+1}$ is a \emph{supermartingale}. More precisely, if we
denote by $c$ the infimum of the left hand side of \eqref{eq:mexpanding}, then
\begin{equation}
	\E(X_{n+1} \vert \cA_n) \leq X_n-c.
\end{equation}

\begin{theorem}[Azuma's inequality] \label{teo:azuma}
	Let $(M_n, \cA_n)$ be a supermartingale on a probability space $(\Omega, \mathbb{P})$. Suppose that there exists $L$ such that 
	\[|M_{n+1}-M_n| \leq L \text{ almost surely.}\]
	Then, for every $\lambda>0$ and every $n \geq 0$ we have
	\begin{equation} 
		\mathbb{P}(M_n \geq M_0+ \lambda) \leq \exp \left(-\frac{\lambda^2}{2 L^2 n} \right).
	\end{equation}
\end{theorem}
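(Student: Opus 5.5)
The plan is the standard exponential-moment (Chernoff) argument, run on the martingale differences. Write $D_k = M_k - M_{k-1}$ for $k\ge1$. By the supermartingale property, $\E(D_k\mid\cA_{k-1})\le 0$, and by hypothesis $|D_k|\le L$ almost surely. For a parameter $s>0$, Markov's inequality applied to $e^{s(M_n-M_0)}$ gives
\[
\mathbb{P}(M_n-M_0\ge\lambda)\le e^{-s\lambda}\,\E\bigl(e^{s(M_n-M_0)}\bigr).
\]
So everything reduces to bounding the exponential moment by $e^{s^2L^2n/2}$. Optimizing at $s=\lambda/(L^2n)$ then yields $\exp(-\lambda^2/(2L^2n))$.

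The key step is the conditional version of Hoeffding's lemma. If $D$ is a random variable with $|D|\le L$ and $\E(D\mid\cA)\le0$, then
\[
\E(e^{sD}\mid\cA)\le e^{s^2L^2/2}.
\]
To prove it, use convexity of $u\mapsto e^{su}$ on $[-L,L]$:
\[
e^{sD}\le \frac{L-D}{2L}e^{-sL}+\frac{L+D}{2L}e^{sL}.
\]
Taking conditional expectations and using $\E(D\mid\cA)\le 0$, together with $e^{sL}-e^{-sL}\ge0$ for $s>0$, gives
\[
\E(e^{sD}\mid\cA)\le\cosh(sL)\le e^{s^2L^2/2}.
\]
The last inequality follows by comparing Taylor series termwise, since $(2k)!\ge 2^k k!$. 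The supermartingale (rather than martingale) hypothesis is harmless precisely because the coefficient of $\E(D\mid\cA)$ is nonnegative when $s>0$.

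Next comes the induction by the tower property. Since $M_{n-1}-M_0$ is $\cA_{n-1}$-measurable,
\[
\E\bigl(e^{s(M_n-M_0)}\bigr)=\E\Bigl(e^{s(M_{n-1}-M_0)}\,\E\bigl(e^{sD_n}\mid\cA_{n-1}\bigr)\Bigr)\le e^{s^2L^2/2}\,\E\bigl(e^{s(M_{n-1}-M_0)}\bigr).
\]
Iterating gives the bound $e^{ns^2L^2/2}$. The case $n=0$ is trivial, since $M_0-M_0=0<\lambda$.

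There is no real obstacle here. The only point needing care is the Hoeffding step with the inequality $\E(D\mid\cA)\le0$ in place of equality, which is handled by the sign observation above. Integrability is automatic because the increments are bounded.
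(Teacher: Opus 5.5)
Your proof is correct and complete. The Chernoff bound, the conditional Hoeffding lemma, and the tower-property induction give exactly the stated one-sided bound. Your sign observation is what makes $\E(D\mid\cA)\le 0$ sufficient for $s>0$, and the choice $s=\lambda/(L^2n)$ is the right optimization.

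The paper gives no proof of this statement. It quotes Azuma's inequality as a classical fact and uses it only to derive Corollary~\ref{cor:Azuma}, so there is no alternative route to compare. Your argument is the standard one, and it covers precisely the supermartingale version the paper needs. The degenerate cases are trivial: $n=0$, which you handle, and $L=0$, where $M_n=M_0$ almost surely.
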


\begin{corollary} \label{cor:Azuma}
	Given any $\epsilon>0$ and $\alpha>0$ there exists $C>0$ such that
	\[\mathbb{P}(\{M_n-M_0 \leq C+\alpha n \ \forall n \geq 0 \})>1-\epsilon.\]
\end{corollary}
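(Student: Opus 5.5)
The plan is to derive the corollary from Theorem~\ref{teo:azuma} by a union bound over $n$. The hypotheses are inherited from the theorem: $(M_n,\cA_n)$ is a supermartingale with increments bounded by $L$. The inequality $M_n - M_0 \leq C + \alpha n$ is trivially true for $n=0$ whenever $C \geq 0$. It therefore suffices to bound the probability of the complementary event
\[
E = \bigcup_{n\geq 1}\{M_n - M_0 > C + \alpha n\}.
\]

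For each $n \geq 1$ I apply Theorem~\ref{teo:azuma} with $\lambda = C + \alpha n > 0$, which gives
\[
\mathbb{P}(M_n - M_0 \geq C+\alpha n) \leq \exp\left(-\frac{(C+\alpha n)^2}{2L^2 n}\right).
\]
Next I use the elementary inequality $(C+\alpha n)^2 \geq 2C\alpha n + \alpha^2 n^2$. It lets me bound the right-hand side by
\[
\exp\left(-\frac{C\alpha}{L^2}\right)\exp\left(-\frac{\alpha^2 n}{2L^2}\right).
\]
The useful feature is that the $C$-dependence now factors out uniformly in $n$. Summing over $n \geq 1$ then gives
\[
\mathbb{P}(E) \leq e^{-C\alpha/L^2}\sum_{n\geq 1} q^n = e^{-C\alpha/L^2}\,\frac{q}{1-q}, \qquad q = e^{-\alpha^2/(2L^2)} < 1.
\]

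Since $\alpha>0$, the geometric series converges. The prefactor tends to $0$ as $C\to\infty$, so I choose $C$ large enough that the bound is strictly less than $\epsilon$; explicitly, $C > \frac{L^2}{\alpha}\log\frac{q}{\epsilon(1-q)}$ together with $C\geq 0$ suffices. The complement of $E$ then has probability greater than $1-\epsilon$, which is the claim.

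I expect no real obstacle here. The only step needing care is the factorization of the exponent, which makes the series summable with a bound that shrinks as $C$ grows. A naive bound of $e^{-\alpha^2 n/(2L^2)}$ per term is not enough, because its sum does not become small. Note also that $C$ depends on $\epsilon$, $\alpha$ and $L$, with $L$ implicit in the corollary's hypotheses. In the intended application to $X_n$, the constant $L$ is $\max|\log \m(Df)|$, which is finite by compactness of $M$.
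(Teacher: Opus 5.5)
Your proof is correct and follows the paper's argument: apply Azuma with $\lambda = C+\alpha n$, factor the bound as $e^{-C\alpha/L^2}e^{-\alpha^2 n/(2L^2)}$, and use a union bound over $n\geq 1$ with a geometric series, taking $C$ large. Your explicit handling of $n=0$ and the slightly sharper constant $q/(1-q)$ in place of $1/(1-q)$ are harmless refinements.
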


\begin{proof} 
	Taking $\lambda = C + \alpha n$ in Theorem \ref{teo:azuma}
	gives us the bound
	\begin{equation}
		\mathbb{P}(M_n-M_0 \geq C+\alpha n) 
		< \exp\left(-\frac{C \alpha}{L^2}\right) \cdot \exp\left(-\frac{\alpha^2 n}{2 L^2}\right).
	\end{equation}
	Hence
	\begin{align*}
		& \mathbb{P}\left(\bigcup_{n\geq 1} 
		\{M_n - M_0 \geq C+\alpha n\}\right) \\
		& < \exp\left(-\frac{C \alpha}{L^2}\right) \cdot
		\frac{1}{1-\exp\left(-\frac{\alpha^2}{2 L^2}\right)} < \epsilon
	\end{align*}
	for sufficiently large $C$.
	\end{proof}

\subsection{Local unstable manifolds}
	A connected open set $U \subset \Td$ is called \emph{inessential} if
	every closed curve in $U$ is nullhomotopic in $\Td$. In other words, $U$
	is inessential if the  homomorphism $i_\star: \pi_1(U) \to \pi_1(\Td)$ is
	trivial, where $i:U \to \Td$ is the inclusion map.
	It is traightforward to verify that if $U \subset \Td$ is inessential,
	then $f^{-n}(U)$ is the union of $d^n$ connected components, all of
	which are inessential. Given $U \subset \Td$ inessential and $\hx =
	(x_0, x_1, \ldots) \in \bL_f$ with $x_0 \in U$, denote by
	\begin{itemize}
		\item $U_{\hx}^n$ the
	connected component of $f^{-n}(U)$ that contains $x_n$; by
	\item $\hU_{\hx}^n = \hf^n(\pi^{-1}(U_{\hx}^n))$ the set of points
		$(y_0, y_1, \ldots)
		\in \pi^{-1}(U)$ for which $y_i \in U_{\hx}^i$ for every $0 \leq
		i \leq n$; and by
	\item $\hU_{\hx} = \bigcap_{n \geq 1} \hU_{\hx}^n$
	the set of points $\hy = (y_0, y_1, \ldots) \in \bL_f$ for which $y_n
	\in U_{\hx}^n$ for every $n \geq 0$.
	\end{itemize}

	Now, if we write $\Sigma = \pi^{-1}(x_0)$ we may identify $U\times
	\Sigma$ with an open subset of $\bL_f$ through the mapping
	\[\phi_U(y,\hx) = (y_0, y_1, y_2, \ldots),\]
	where $y_n$ is the unique point in $U_{\hx}^n$ which is mapped to $y$ by
	$f^n$. 

	Although it is true that the volume of $U_{\hx}^n$ goes to zero (ineed exponentially fast), these sets are, in general, not contained in small balls. On the contrary, if we consider a sequence of lifts $\tU_{\hx}^n$, their diameter may very well go to infinity exponentially fast. 

	Still, when $f$ is non-uniformly expanding, there are many choices of
	$U$ and $\hx$ for whch $\diam(U_{\hx}^n) \to 0$. In this case, we refer
	to $\hU_{\hx}$ as an \emph{local unstalbe manifold} of $\hx$ under
	$\hf$. (Note that the local unstalbe manifold is a subset of $\bL_f$,
	rather than $\Td$.) The following is a reformulation of \cite[Lemma 2.7]{ABV}. 

\begin{lemma} \label{lem:unstable_mfd_abv}
	Given $0< \sigma < 1$ there exists $0< \delta < \half$ such that if
	for some $\hx \in \bL_f$ we have
	\begin{equation}
		\prod_{k=1}^{n} \| Df(x_k)^{-1}\| \leq \sigma^n
	\end{equation}
	for every $n \geq 1$, then, taking $U = B_\delta(x_0)$, we have
	\begin{equation}
		d(x_n,y_n) \leq \delta \sigma^{n/2}
	\end{equation}
	for every $(y_0, y_1, \ldots) \in \hU_{\hx}$ and $n \geq 0$.
\end{lemma}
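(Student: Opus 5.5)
The plan is to prove, by induction on $n$, that the branch of $f^{-n}$ along $\hx$ is a contraction on a neighbourhood of $x_0$. This is the usual hyperbolic-times argument of \cite{ABV}.

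First fix the constant $\delta$. Since $f$ is $C^1$ on a compact manifold, $x\mapsto Df(x)^{-1}$ is uniformly continuous. Choose $\delta_1>0$ such that $d(z,w)<\delta_1$ implies
\[
\|Df(z)^{-1}\|\le \sigma^{-1/2}\|Df(w)^{-1}\|.
\]
Shrinking $\delta_1$ if necessary, we may also assume it is smaller than a radius on which $f$ is injective on balls, with $f(B_r(z))\supset B_{r\,\m_{\min}}(f(z))$, where $\m_{\min}=\min\m(Df)$. This guarantees that every ball of radius $<\delta_1$ is inessential, so that inverse branches of $f$ are well defined on it. Then take $\delta<\min(\delta_1,\tfrac12)$.

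The induction claim is the following. For each $n\ge0$ there is an inverse branch $g_n$ of $f^n$, defined on $B_\delta(x_0)$, with $g_n(x_0)=x_n$, whose image is exactly $U^n_{\hx}$, and which satisfies
\[
d(g_n(y),x_n)\le \sigma^{n/2}d(y,x_0)\le \delta\sigma^{n/2}\quad\text{for all }y\in B_\delta(x_0).
\]
The case $n=0$ is trivial.

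For the inductive step $n\to n+1$, the set $g_n(B_\delta(x_0))$ lies in the ball of radius $\delta\sigma^{n/2}<\delta_1$ about $x_n$. So there is a unique local inverse $h$ of $f$ near $x_n$ sending $x_n$ to $x_{n+1}$, and we set $g_{n+1}=h\circ g_n$. To estimate it, join $y$ to $x_0$ by a geodesic $\gamma$ of length $d(y,x_0)$ and push it forward under $g_{n+1}$. At each step $k\le n+1$ the curve $g_k\circ\gamma$ stays within $\delta\sigma^{(k-1)/2}\le\delta_1$ of $x_k$. This holds by the inductive hypothesis for $k\le n$, and for $k=n+1$ it is a bootstrap: we argue on the maximal subarc staying inside the $\delta_1$-ball, which must then be the whole curve.

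Along the curve, the chain rule and the distortion choice give
\[
\|Dg_{n+1}\|\le \prod_{k=1}^{n+1}\|Df(g_k\gamma)^{-1}\|\le \sigma^{-(n+1)/2}\prod_{k=1}^{n+1}\|Df(x_k)^{-1}\|\le\sigma^{(n+1)/2}.
\]
Integrating along $\gamma$ yields the bound for $n+1$. Because $g_{n+1}(B_\delta(x_0))$ is connected, contains $x_{n+1}$, and is a full preimage component, it is exactly $U^{n+1}_{\hx}$.

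Finally, any $\hy\in\hU_{\hx}$ has $y_n\in U^n_{\hx}$ and $f^n(y_n)=y_0\in B_\delta(x_0)$. Hence $y_n=g_n(y_0)$, and the desired estimate $d(x_n,y_n)\le\delta\sigma^{n/2}$ follows.

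The main obstacle is the bookkeeping in the inductive step. We must make sure the pulled-back curve never leaves the region where the distortion estimate holds, and that the component $U^n_{\hx}$ coincides with the image of the branch $g_n$; that identification needs inessentiality and injectivity on small balls. The loss of a factor $\sigma^{-1/2}$ per step from distortion is exactly why the conclusion has $\sigma^{n/2}$ rather than $\sigma^n$.
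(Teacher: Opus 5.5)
Your proposal is correct and is essentially the argument of \cite[Lemma 2.7]{ABV}, which the paper cites in place of a proof. As there, you choose $\delta_1$ by uniform continuity so that each backward step loses at most a factor $\sigma^{-1/2}$, then induct along the backward orbit, bootstrapping that the pulled-back geodesic stays $\delta_1$-close to the $x_k$; this gives the $\sigma^{n/2}$-Lipschitz bound for the inverse branch of $f^n$ on $B_\delta(x_0)$, and hence the estimate for every $\hy\in\hU_{\hx}$. One small point: inverse branches are defined on small balls because such balls are contractible, hence inessential, so the covering $f^n$ is trivial over them; injectivity of $f$ on balls does not give this, though your conclusion is unaffected.
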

In particular, Lemma \ref{lem:unstable_mfd_abv} says that $\diam(U_{\hx}^n) \to
0$.

Combining Lemma \ref{lem:unstable_mfd_abv} and corollary \ref{cor:Azuma} we get:

\begin{lemma} \label{lem:lots_of_unst_mfds}
	Let $f:M \to M$ be as in the hypothesis of 
	Theorem \ref{thm:ergodic_mexpanding}.
	For every $\epsilon>0$ there exists $\delta> 0$ such that
	$m_x(L_\delta)>1-\epsilon$ for every $x \in M$.
\end{lemma}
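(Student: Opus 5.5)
The set $L_\delta$ is not defined in the statement. I will read it as the set of $\hx\in\Sigma_x$ for which $\hU_{\hx}$, with $U=B_\delta(x)$, is a local unstable manifold. Concretely, $L_\delta$ should contain every $\hx$ satisfying the hypothesis of Lemma~\ref{lem:unstable_mfd_abv}, perhaps after a finite shift. The plan is to combine the supermartingale $X_n$ on $(\Sigma_x,m_x)$ with Corollary~\ref{cor:Azuma} to produce, on a set of $m_x$-measure at least $1-\epsilon$, a uniform exponential bound on $\prod_{k\le n}\|Df(x_k)^{-1}\|$. Lemma~\ref{lem:unstable_mfd_abv} then converts this bound into a shrinking neighbourhood.

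First I replace $f$ by the iterate $F$ from \eqref{eq:mexpanding}. The inverse limits and the measures $m_x$ are compatible with passing to iterates, so this costs nothing.

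With $F$ in place, let $c>0$ be the minimum of the left side of \eqref{eq:mexpanding}, which is positive by compactness and continuity. Set $M_n=X_n+cn$. The computation above gives $\E(M_{n+1}\mid\cA_n)\le M_n$, so $M_n$ is a supermartingale with $M_0=0$. Its increments are bounded by $L=c+\max_M|\log\m(DF)|$, which is finite since $F$ is a $C^1$ local diffeomorphism of a compact manifold.

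Now apply Corollary~\ref{cor:Azuma} with $\alpha=c/2$. This gives $C$, depending only on $\epsilon$, $c$ and $L$ and not on $x$, such that the set
\[G_x=\{\hx\in\Sigma_x: X_n(\hx)\le C-\tfrac{c}{2}n \ \forall n\ge0\}\]
has $m_x(G_x)>1-\epsilon$. For $\hx\in G_x$ we have $\prod_{k=1}^n\|DF(x_k)^{-1}\|\le e^{C}e^{-cn/2}$ for every $n$. Uniformity in $x$ matters here, and it comes for free because $c$ and $L$ are global constants.

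The main obstacle is that Lemma~\ref{lem:unstable_mfd_abv} requires the clean bound $\le\sigma^n$ for all $n\ge1$, while Azuma only gives a bound with the prefactor $e^C$. I would handle this by one of two routes.

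The first route is to re-prove the lemma's inductive argument with a constant. By induction one gets $U^n_{\hx}\subset B_{\delta_n}(x_n)$ with $\delta_n=\delta e^{C/2}\sigma^{n/2}$. This requires that the inverse branch of $F$ along $x_{n}\mapsto x_{n+1}$ contracts at rate close to $\|DF(x_{n+1})^{-1}\|$ on balls of radius at most $\delta e^{C/2}$, which follows from uniform continuity of $DF$ once $\delta$ is taken small relative to $e^{-C}$. The resulting $\delta$ depends only on $\epsilon$.

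The second route is a Pliss-type selection, where the conclusion would then hold after passing to a hyperbolic time. I would prefer the first route, since it keeps $\hx$ itself in $L_\delta$.

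In either case $G_x\subset L_\delta$, which gives $m_x(L_\delta)>1-\epsilon$ for every $x\in M$.
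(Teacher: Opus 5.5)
Your proof is correct and shares the paper's skeleton: the drift supermartingale, Corollary~\ref{cor:Azuma} with constants independent of $x$, then an ABV-type contraction estimate. The real difference is how you absorb the prefactor $e^{C}$.

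The paper uses Lemma~\ref{lem:unstable_mfd_abv} as a black box. It passes to an iterate $f^m$ with $m$ so large that $e^{C-cmj/4}\le\sigma^j$ for all $j\ge1$. Then every $\hx$ in the Azuma set satisfies that lemma's hypothesis verbatim for $f^m$, and the intermediate times are recovered via the bound $K_m$ on $\|Df^r\|$.

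You instead rerun the inductive contraction with the constant carried along, and pay for it by taking $\delta\lesssim e^{-C}$. This avoids an extra iterate and keeps all times. With per-step slack $\sigma^{-1/2}$ the radii come out as $\delta e^{C}\sigma^{n/2}$, not $\delta e^{C/2}\sigma^{n/2}$. That slip is harmless, since $\delta\ll e^{-C}$ anyway.

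What the paper's route buys is that it lands exactly on the clean condition $\prod_{k\le n}\|Df(y_k)^{-1}\|\le\sigma^n$. This is the condition defining the set $H$ used later, together with the distortion bound \cite[Proposition 2.8]{ABV}, in Lemma~\ref{lem:measure_m_hat_omega}. Your set $G_x$ carries the factor $e^{C}$, so that later step would need the same modification or the iterate trick anyway.

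Your preliminary reduction to $F=f^N$ is legitimate: fibres and the measures $m_x$ correspond under $(x_k)\mapsto(x_{Nk})$. However, going back from $F$ to $f$ is not literally free. You need $U^{Nj+s}_{\hx}=f^{N-s}(U^{N(j+1)}_{\hx})$ together with a bound on $\|Df^r\|$ for $r\le N$. This is exactly the paper's $K_m$ step, and you should state it.
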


\begin{proof}
	Fix $c>0$ so that the left hand side of \eqref{eq:mexpanding} is greater
	than $c$ for every $x$. Apply Corollary \ref{cor:Azuma} with $M_0 = 0$,
	$M_n = X_n + nc/2$ for $n \geq 0$, and $\alpha = c/4$. Then $M_n-M_0
	\leq C + \alpha n$ reads as $X_n \leq C - nc/4$, so Corollary
	\ref{cor:Azuma} says that the set of points for which
	\[ \prod_{k=1}^n \| Df(x_k)^{-1}\| < e^{C-nc/4}\]
	for every $n \geq 1$ has $m_x$-measure greater than $1-\epsilon$.

	For every $j\ge1$ we have
	\[
	\prod_{r=1}^j
	\|D(f^m)(x_{rm})^{-1}\|
	\le
	\prod_{k=1}^{mj}\|Df(x_k)^{-1}\|
	\le
	e^{C-cmj/4}
	\le
	\sigma^j.
	\]
	Thus $\hx$ satisfies the hypothesis of Lemma~\ref{lem:unstable_mfd_abv}
	for the map $f^m$. Consequently, there exists $\delta_0>0$, independent
	of $x$ and of $\hx$, such that the corresponding inverse branch for
	$f^m$ contracts a ball of radius $\delta_0$ exponentially.

	To obtain the same conclusion for all intermediate iterates of $f$, let
	\[
	K_m=
	\max_{0\le r<m}\sup_{z\in M}\|Df^r(z)\|.
	\]
	Taking
	\[
	\delta=\frac{\delta_0}{K_m},
	\]
	we obtain a local unstable manifold of size $\delta$ for $f$ itself.

	Therefore, for every $x\in M$, the set $L_\delta\subset\Sigma_x$ of
	points admitting a local unstable manifold of size $\delta$ satisfies
	\[
	m_x(L_\delta)>1-\epsilon.
	\]
\end{proof}

\subsection{Ergodicity}
Since collective expansion is preserved under iteration,
and ergodicity of an iterate implies ergodicity of the
original map, we may replace $f$ by a sufficiently large
iterate. In view of the proof of
Lemma~\ref{lem:lots_of_unst_mfds}, we may therefore assume
that, for some $0<\sigma<1$, the set
\[
H_x=\left\{\hat y\in\Sigma_x:
\prod_{k=1}^n\|Df(y_k)^{-1}\|\leq\sigma^n
\text{ for every }n\geq1\right\}
\]
has positive $m_x$-measure for every $x$.

Fix an arbitrary point $x\in\Td$, $\delta>0$ as in
Lemma~\ref{lem:lots_of_unst_mfds}, and let
\begin{itemize}
	\item $ D = B_\delta (x)$,
	\item $\Sigma = \pi^{-1}(x)$,
	\item $H = \{\hy \in \Sigma: \prod_{k=1}^n \| Df(y_k)^{-1}\|
		\leq \sigma^n \ \forall n \geq 1$ \},
	\item $\Omega = D \times H$, identified with its image under $\phi_D$,
	\item $m_D$ be the restriction of $m$ to $D$,
	\item $\hm_{\Omega}$ be the restriction of $\hm$ to $\Omega$ (seen through
		$\phi_D$),
	\item $\cF_{\ty}^n:D \to D_{\ty}^n$, $\ty \in \Sigma$ be the inverse of $f^n \vert
		D_{\ty}^n : D_{\ty}^n \to D$,
	\item $\nu$ be the measure on $\Sigma$ defined on cyliners
		\[
			[y_1, y_2,
			\ldots, y_n] = \{\hz \in \Sigma_x: z_i = y_i
			\text{ for } 1 \leq i \leq n\}
		\]
		by
		\[ \nu([y_1, \ldots, y_n]) =  m(D_{\hy}^n).\]
		Equivalently, $\nu = \int_D m_z \ dm(z)$. 
\end{itemize}

Clearly, $D, \Sigma, H, \Omega, m_D, \hm_\Omega$ and $\nu$ all depend on $x$ but
this dependence is omitted for neatness.
	
\begin{lemma} \label{lem:measure_m_hat_omega}
	The measure $\hm_{\Omega}$ is equivalent to $m_D \times \nu_{H}$.
	Moreover, the Radon Nikodym derivative of the former with respect to the
	latter is bounded from above and from zero.
\end{lemma}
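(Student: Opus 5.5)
We compare the two measures on the product sets generating the $\sigma$-algebra of $\Omega$. These are sets $\phi_D(B\times C)$, where $B\subset D$ is a Borel set and $C=[y_1,\ldots,y_n]\cap H$ is a cylinder intersected with $H$. We will show that for such sets
\[
\hm_\Omega(\phi_D(B\times [y_1,\ldots,y_n])) = m(\cF^n_{\hy}(B)),
\]
a formula that comes straight from the definition of $\hm$ as the lift of $m$. The set $\phi_D(B\times[y_1,\ldots,y_n])$ is exactly $\hf^n(\pi^{-1}(\cF^n_{\hy}(B)))$. Since $\hf$ preserves $\hm$ and $\hm\circ\pi^{-1}=m$, its measure is $m(\cF^n_{\hy}(B))$. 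By contrast, $(m_D\times\nu)(B\times[y_1,\ldots,y_n])=m(B)\,m(D^n_{\hy})$, since $m(D)$ is a fixed constant of order $\delta^d$ and can be absorbed into the constants.

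The key step is a bounded distortion estimate along inverse branches in $H$. We aim to show that there is a constant $K\ge1$, independent of $n$ and of the cylinder, such that whenever $[y_1,\ldots,y_n]$ meets $H$,
\[
K^{-1}\,\frac{m(B)}{m(D)}\le \frac{m(\cF^n_{\hy}(B))}{m(D^n_{\hy})}\le K\,\frac{m(B)}{m(D)}
\qquad\text{for all Borel }B\subset D.
\]
For this we take $\hy\in H$ in the cylinder and use Lemma~\ref{lem:unstable_mfd_abv}. For every $z\in D$ it gives $d(\cF^k_{\hy}(z),y_k)\le\delta\sigma^{k/2}$, so the preimage orbits of any two points of $D$ under the branch $\cF^n_{\hy}$ stay exponentially close. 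By the $C^{1+}$ hypothesis, $\log|\det Df|$ is Hölder with some exponent $\alpha$. Hence
\[
\bigl|\log|\det D\cF^n_{\hy}(z)|-\log|\det D\cF^n_{\hy}(w)|\bigr|\le \sum_{k\ge1}\mathrm{Höl}\cdot(2\delta\sigma^{k/2})^\alpha=:\log K_0<\infty .
\]
Applying the change of variables formula to $B$ and to $D$ then gives the displayed inequality with $K=K_0^2$.

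It remains to pass from cylinders to the full $\sigma$-algebra. Sets of the form $B\times([y_1,\ldots,y_n]\cap H)$ form a $\pi$-system generating the Borel sets of $\Omega$. There is a subtlety: $H$ is not a union of cylinders, so we cannot simply apply the estimate to each cylinder. We handle this by approximation. Intersect both measures with $D\times H$, and approximate $B\times(C\cap H)$ from outside by finite unions of product sets $B\times C'$ with $C'$ ranging over the cylinders of length $N$ meeting $C\cap H$. Because $H$ is closed in $\Sigma$, these unions decrease to $B\times(C\cap H)$ as $N\to\infty$, and only cylinders meeting $H$ appear, so the distortion bound applies to each of them. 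Passing to the limit by monotone convergence preserves the two-sided comparison. A monotone class argument then extends the inequality $K^{-1}(m_D\times\nu_H)\le \hm_\Omega\le K(m_D\times\nu_H)$ to all Borel sets, which yields equivalence with Radon–Nikodym derivative in $[K^{-1},K]$.

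The main obstacle is the distortion step. The contraction estimate of Lemma~\ref{lem:unstable_mfd_abv} is available only for branches through points of $H$, and the summability of the Hölder errors must be uniform in $n$. For this, the reduction made at the start of the Ergodicity subsection, replacing $f$ by an iterate so that the uniform $\sigma^n$ bound holds for all $n\ge1$, is essential.
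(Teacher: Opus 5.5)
Your proof is correct, but it takes a different route from the paper's.

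\textbf{The paper's route.} The paper introduces the finite-level densities $\rho_n(z,\hy)=|\det D\cF^n_{\hy}(z)|/\nu([y_1,\ldots,y_n])$ and treats them as a martingale. It stops this martingale at the first exit time $\tau$ from $H_n$, so that the distortion bound (quoted from ABV, Proposition~2.8) makes it uniformly bounded. It then applies the martingale convergence theorem and identifies the limit on $\Omega=D\times H$ with the density.

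\textbf{Your route.} You prove the two-sided comparison directly on product sets. You use the exact identity $\hm(\phi_D(B\times[y_1,\ldots,y_n]))=m(\cF^n_{\hy}(B))$, together with a distortion bound you derive yourself from Lemma~\ref{lem:unstable_mfd_abv} and H\"older continuity of $\log|\det Df|$. You handle the fact that $H$ is not a union of cylinders by outer approximation with cylinders that actually meet $H$, using that $H$ is closed.

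\textbf{What each buys.} Your approach avoids martingale convergence and the stopping-time bookkeeping altogether. In particular, you never need the paper's extra claim that distortion stays controlled for one more inverse step at time $\tau$. The identification of the density on $\Omega$ also follows at once from your set inequalities, rather than needing an optional-stopping argument. The paper's route gives something extra: a formula, namely that the density is the almost-everywhere limit of the explicit $\rho_n$ on $\Omega$. You obtain only its existence and bounds.

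\textbf{A technical point in your last step.} The class of sets $E$ with $K^{-1}\mu_2(E)\le\mu_1(E)\le K\mu_2(E)$ is closed under monotone limits and finite disjoint unions, but not under proper differences. So it is not a Dynkin system, and a bare $\pi$-system argument does not apply. Your product sets $B\times(C\cap H)$ form a semiring, so first extend the inequalities to the algebra of finite disjoint unions. Then apply the monotone class theorem, which is valid here since both measures are finite.

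\textbf{Constants.} Keeping track of them, the density lies in $[1/(K_0m(D)),\,K_0/m(D)]$, exactly as in the paper. Also, $K=K_0$ already suffices in your distortion step; $K_0^2$ is not needed.
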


\begin{proof}
Let $\cF_n$ be the filtration of $\sigma$-algebras on $D \times \Sigma $ generated
by sets of the form $D \times [a_1, \ldots, a_n]$. 
Define
\[
\rho_n:D\times\Sigma\longrightarrow (0,\infty)
\]
by
\begin{equation}
\label{eq:rho_n}
\rho_n(z,\hy)
=
\frac{|\det D\cF_{\hy}^n(z)|}
{\nu([y_1, \ldots, y_n])}.
\end{equation}
In particular, $\rho_n$ is constant in the second coordinate on every
$n$-cylinder, and hence is $\cF_n$-measurable. 

By construction, $\rho_n$ is the Radon--Nikodym derivative of the restriction
of $\hm$ to $\cF_n$ with respect to the restriction of $m_D\times\nu$ to
$\cF_n$. Consequently, $(\rho_n,\cF_n)_{n\geq0}$ is a martingale.

For $n\geq1$, let
\[
H_n=
\left\{
\hy \in \Sigma:
\prod_{k=1}^j\|Df(y_k)^{-1}\|
\leq\sigma^j
\text{ for every }1\leq j\leq n
\right\}.
\]
Then $H_n\in\cF_n$,
\[
H_{n+1}\subset H_n,
\qquad
H=\bigcap_{n\geq1}H_n.
\]
Let
\[
\tau(\hx)=\inf\{n\geq1:\hx\notin H_n\},
\]
with the convention $\tau(\hx)=\infty$ if $\hx\in H$.
Thus $\tau$ is a stopping time.

Consider the stopped martingale
\[
\trho_n=\rho_{n\wedge\tau}.
\]
By \cite[Proposition 2.8]{ABV}, there exists $K\geq1$ such that
\[
\frac1K
\leq
\frac{|\det D\cF_{\hy}^n(z)|}
     {|\det D\cF_{\hy}^n(z')|}
\leq K
\]
for every $z,z' \in D$ and every $n$ for which $\hy\in H_n$.
At the stopping time, the same estimate holds after increasing $K$ if
necessary, since only the last inverse iterate is not covered by the
hyperbolic estimate and its distortion is uniformly bounded by compactness
and the $C^{1+\alpha}$ regularity of $f$.

It follows from \eqref{eq:rho_n} that
\[
\frac{1}{K\,m(D)}
\leq
\trho_n(y,\hx)
\leq
\frac{K}{m(D)}
\]
for every $n$ and almost every $(y,\hx)\in D\times\Sigma$.
Hence $(\trho_n,\cF_n)$ is a uniformly integrable martingale. By the martingale
convergence theorem, $\trho_n\longrightarrow\trho$ almost surely and in $L^1(m_D\times\nu)$.

If $\hx\in H$, then $\tau(\hx)=\infty$, so $\trho_n(y,\hx)=\rho_n(y,\hx)$ for every $n$. 
Therefore $\rho_n$ converges on $\Omega=D\times H$ to $\rho=\trho|_{\Omega}$, and
\[
\frac{1}{K\,m(D)}
\leq
\rho
\leq
\frac{K}{m(D)}
\]
almost everywhere on $\Omega$.

Since $\rho_n$ is the finite-level density of $\hm$ with respect to
$m_D\times\nu$, passing to the limit on $\Omega$ shows that
\[
\frac{d\hm_{\Omega}}
     {d(m_D\times\nu_H)}
=
\rho.
\]
In particular, $\hm_{\Omega}\sim m_D\times\nu_H,$ and the Radon-Nikodym derivative is bounded from above and away from zero.
\end{proof}

We are now in position to conclude ergodicity of colectively expanding maps. The
proof is a variation of the famous Hopf argument \cite{Hopf}. 

\begin{proof}[Proof of Theorem~\ref{thm:ergodic_mexpanding}: Ergodicity]

	We fix $f: M \to M$ as in the hypothesis of
	Theorem~\ref{thm:ergodic_mexpanding}. Let $\delta>0$ be as in 
	Lemma~\ref{lem:lots_of_unst_mfds}. Fix an arbitrary $x \in \Td$ and let
	$D = B_\delta(x)$ and take $\Sigma$, $H$, and $\Omega$ as above. Let
	$\cR$ be the set of points in $\bL_f$ for which forward and backward
	Birkhoff averages exist and coincide under all continuous functions. Our
	aim is to show that the empirical measures map $e:\cR \to
	\cM_{\hf}(\bL_f)$, given by
	\[e_{\pm}(\hx) = \lim_{n \to  \infty} \frac{1}{n} \sum_{k=0}^{n-1}
	\delta_{\hf^{\pm k} (\hx)},\]
	is constant $\hm$-almost everywhere on $\pi^{-1}(D)$. Then ergodicity
	follows by covering $\Td$ by disks of this form and observing that
	Birkhoff averages must coincide almost everywhere on the overlaps.

	To see why $e$ is constant on $\pi^{-1}(D)$, recall from
	Lemma~\ref{lem:measure_m_hat_omega} that $\hm_{\Omega}$ is equivalent to
	$m_D \times \nu_H$. Consequently, there is
	some $\hy \in H$ such that $\pi(\hD_{\hy}\cap \cR)$ has full $m$-measure
	in $D$. Then $e_-$ is well-defined and constant everywhere on
	$\hD_{\hy}$. Hence $e_{+}$ is well-defined and constant on a subset $D'
	\subset \hD_{\hy}$ such that $\pi(D')$ has full $m$-measure in $D$. But
	$e_+$ is constant on fibers of $\pi$. Hence $e_+$ is constant on $\cR' =
	\pi^{-1}(D')$, which has full $\hm$-measure in $\pi^{-1}(D)$. 

\end{proof}

\subsection{Dissipative perturbations}

The arguments in this work rely heavily on the assumption that $f$ preserves
Lebesgue measure. In particular, the definition of collective
expansion and the proof of Theorem~\ref{thm:ergodic_mexpanding}
make essential use of how the lift of Lebesgue measure to the natural extension
disintegrates along fibers.

Nevertheless, there is an interesting connection with collective expansion 
and the notion of virtually expanding
maps introduced by Tsujii in \cite{Tsu23}.

Given a smooth local diffeomorphism
$f:M\to M$ and $\mu>0$, let
\[
B_\mu(f)=
\sup_{\substack{x\in M\\0\neq\xi\in T_x^*M}}
\sum_{y\in f^{-1}(x)}
\frac{1}{|\det Df(y)|}
\left(
\frac{\|\xi\|}{\|Df(y)^*\xi\|}
\right)^\mu.
\]
The map $f$ is called virtually expanding if there exist
$\mu>0$ and $n\geq1$ such that
\[
B_\mu(f^n)<1.
\]
Tsujii proved that virtual expansion is a $C^1$-open
condition in the space of smooth local diffeomorphisms,
and that the associated Perron--Frobenius operator is
quasi-compact on a Sobolev space of positive order.
Using this, he showed that every virtually expanding
map admits finitely many ergodic absolutely continuous
invariant measures, whose basins cover Lebesgue almost
every point.
We shall show that every conservative collectively
expanding map is virtually expanding. (Thank you Sylvain Crovisier, 
Ali Tahzibi and Meysam Nassiri for pointing  me in this direction.)
The converse, however, is not true. Collective expansion is stronger than
virtual expansion, since it averages the minimal expansion rates of the inverse
branches, whereas virtual expansion takes into account the different directions
of expansion along those branches.

\begin{proposition}\label{prop:CE_implies_VE}
Every $C^\infty$ conservative collectively expanding
local diffeomorphism is virtually expanding.
\end{proposition}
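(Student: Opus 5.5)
We have to show that if $F=f^n$ satisfies \eqref{eq:mexpanding} with a uniform margin, then $B_\mu(F)<1$ for some small $\mu>0$. Since the virtual-expansion condition allows passing to an iterate, it suffices to work with $F$ directly. The plan is a first-order expansion in $\mu$ around $\mu=0$.

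First observe that for every $x$, every $y\in F^{-1}(x)$ and every covector $\xi\neq0$,
\[
\frac{\|\xi\|}{\|DF(y)^*\xi\|}\leq \frac{1}{\m(DF(y)^*)}=\frac{1}{\m(DF(y))},
\]
because the conorm of the adjoint equals the conorm of the matrix. Hence, writing $p_y=|\det DF(y)|^{-1}$ (a probability vector on $F^{-1}(x)$ by conservativity) and $a_y=\log\m(DF(y))$,
\[
B_\mu(F)\leq \sup_{x\in M}\sum_{y\in F^{-1}(x)} p_y\, e^{-\mu a_y}.
\]
So it is enough to bound the right-hand side, which no longer involves $\xi$.

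Next use the elementary inequality $e^{s}\leq 1+s+\frac{s^2}{2}e^{|s|}$ with $s=-\mu a_y$. By compactness there is $L$ with $|a_y|\leq L$ for all $y$; also let $c>0$ be the minimum over $x$ of $\sum_y p_y a_y$, which exists and is positive by \eqref{eq:mexpanding} and continuity of the transfer operator applied to the continuous function $\log\m(DF)$ on the compact manifold. Summing against the probability weights $p_y$ gives
\[
\sum_{y} p_y e^{-\mu a_y}\leq 1-\mu c+\frac{\mu^2L^2}{2}e^{\mu L}
\]
uniformly in $x$. Choosing $\mu>0$ small, say with $\mu L^2e^{\mu L}<c$, makes the right-hand side at most $1-\mu c/2<1$, hence $B_\mu(F)<1$ and $f$ is virtually expanding.

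The only step needing care is the uniformity: one needs the positive lower bound $c$ on $\cL\varphi$ to be attained, i.e.\ continuity of $x\mapsto\sum_{y\in F^{-1}(x)}p_ya_y$, which follows since $F$ is a covering map with finitely many continuously varying inverse branches locally. The smoothness hypothesis $C^\infty$ is only needed to place $f$ in Tsujii's framework, not in the estimate itself.
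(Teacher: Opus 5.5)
Your proposal is correct and follows essentially the same route as the paper. The paper also bounds $B_\mu(F)$ by $\sup_x G_\mu(x)$, where $G_\mu(x)=\sum_{y\in F^{-1}(x)}|\det DF(y)|^{-1}\m(DF(y))^{-\mu}$, and uses that $G_0\equiv1$ while $\partial_\mu G_\mu(x)|_{\mu=0}$ is minus the collective-expansion average; your remainder estimate $e^{s}\leq 1+s+\frac{s^2}{2}e^{|s|}$ just makes quantitative the uniformity in $x$ that the paper attributes to compactness.
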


\begin{proof}
Let $F=f^n$ be an iterate satisfying
\eqref{eq:mexpanding}. For $\mu\geq0$, define
\[
G_\mu(x)=
\sum_{y\in F^{-1}(x)}
\frac{\m(DF(y))^{-\mu}}{|\det DF(y)|}.
\]
Since $F$ preserves Lebesgue measure, we have
$G_0(x)=1$ for every $x$. Moreover,
\[
\left.\frac{\partial G_\mu(x)}{\partial\mu}
\right|_{\mu=0}
=
-\sum_{y\in F^{-1}(x)}
\frac{\log\m(DF(y))}{|\det DF(y)|}<0.
\]
By compactness, this inequality is uniform in $x$.
Consequently, for all sufficiently small $\mu>0$,
\[
\sup_{x\in M}G_\mu(x)<1.
\]

Recall that Tsujii's quantity $B_\mu(F)$ is
defined by taking the supremum, over nonzero
cotangent vectors, of the weighted sum of their
inverse expansion rates over preimages.
Since these inverse expansion rates are bounded
above by $\m(DF(y))^{-1}$, we have
\[
B_\mu(F)\leq\sup_{x\in M}G_\mu(x)<1.
\]
Thus $B_\mu(f^n)<1$, which implies that $f$
is virtually expanding.
\end{proof}

\begin{corollary}
Let $f$ be a $C^\infty$ conservative collectively
expanding endomorphism. Then there exists a
$C^1$ neighborhood $\mathcal U$ of $f$ such that
every $C^\infty$ endomorphism $g\in\mathcal U$
admits finitely many ergodic absolutely continuous
invariant measures, whose basins cover Lebesgue
almost every point.
\end{corollary}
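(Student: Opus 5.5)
The plan is to combine Proposition~\ref{prop:CE_implies_VE} with Tsujii's results on virtually expanding maps from \cite{Tsu23}. By Proposition~\ref{prop:CE_implies_VE}, $f$ is virtually expanding. More precisely, its proof provides an iterate $F=f^n$ and some $\mu>0$ with $B_\mu(F)<1$.

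The first step is to check that this inequality is $C^1$-open. Tsujii's theorem already states this, but the direct argument is short and worth recording. Write $B_\mu(F)$ as the supremum over the unit cotangent bundle of
\[
\sum_{y\in F^{-1}(x)}|\det DF(y)|^{-1}\,\|DF(y)^*\xi\|^{-\mu}.
\]
For $G$ that is $C^1$-close to $F$, the preimages $G^{-1}(x)$ stay in bijection with $F^{-1}(x)$ and are close to them, uniformly in $x$, by the local diffeomorphism property and compactness. The determinants and the cotangent norms then vary uniformly continuously in the $C^1$ topology. Hence the function $g\mapsto B_\mu(g^n)$ is upper semicontinuous in the $C^1$ topology, since $g^n$ is $C^1$-close to $f^n$ when $g$ is $C^1$-close to $f$. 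So there is a $C^1$ neighborhood $\mathcal U$ of $f$ in which $B_\mu(g^n)<1$ for every $g$. Every $g\in\mathcal U$ is therefore virtually expanding, with the same $n$ and $\mu$. Note that $g$ need not preserve Lebesgue measure here.

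The second step is to apply Tsujii's theorem to each $C^\infty$ map $g\in\mathcal U$. His theorem gives quasi-compactness of the Perron--Frobenius operator of $g$ on a Sobolev space of positive order. From this it yields finitely many ergodic absolutely continuous invariant measures whose basins cover Lebesgue almost every point. That is exactly the claimed conclusion.

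The main obstacle I expect is matching hypotheses with \cite{Tsu23}. One point is whether his result is stated for $g$ itself or only for iterates satisfying $B_\mu(g^n)<1$. If it is only for iterates, I would pass from $g^n$ to $g$ as follows. Each absolutely continuous invariant measure $\mu'$ of $g^n$ produces the $g$-invariant measure $\frac1n\sum_{k<n}g^k_*\mu'$, which is absolutely continuous because $g$ is a local diffeomorphism. Its ergodic components are absolutely continuous, and there are at most finitely many of them. The basins of $g^n$ are contained in the basins of the corresponding averaged $g$-measures, so the full-measure covering property passes from $g^n$ to $g$. The other point is the smoothness class, which is why the corollary is stated for $C^\infty$ maps $g$, while the openness is required only in the $C^1$ topology.
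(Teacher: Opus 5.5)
Your proposal is correct and follows the paper's (implicit) route: Proposition~\ref{prop:CE_implies_VE} gives $B_\mu(f^n)<1$, and then Tsujii's $C^1$-openness of virtual expansion and his finiteness theorem for ergodic absolutely continuous invariant measures give the corollary. The iterate issue you raise does not arise, because virtual expansion is already defined through some iterate $f^n$ (both in the paper and in Tsujii's work), so his conclusion applies directly to $g$; your averaging argument for passing from $g^n$ to $g$ is nonetheless valid.
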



\section{Constructing collectively expanding maps}
\label{sec:fewer_transvections}

We now turn to the construction of collectively expanding maps in different
homotopy classes.

\subsection{A matrix decomposition}

Now that we have established non-uniform expansion and ergodicity of
collectively expanding maps, a natural question arises: How common are they?
Does every homotopy class contain one? Although we do not give a complete answer
to this question, we will show that many homotopy classes do, including
hyperbolic ones.

To appreciate what makes the question hard, consider the map in Example
\ref{ex:3111}. Is it collectively expanding? Certainly, it cannot satisfy
\eqref{eq:mexpanding} with $F = f^1$. That is because each point has only two
preimages, and if one falls into the critical region, its contraction cannot be
compensated by the expansion in the other preimage.
However, it is a priori possible
that a higher iterate of $f^n$ will work. If for every $x$, a sufficently large
share of the $2^n$ points in $f^{-n}(x)$ fall into the good region (where the
derivative is close to $\sqrt{2} I$), this may very well compensate for the few
preimages that fall into the bad (critical) region. But it is very hard to keep
track of all preimages of $f^n$, even for $n=1$. Fortunately, there is a much
more economical way of deforming a linear map on $\TT$ so that its derivative at
every point is close, not to a dilatation, but a dilatation times an orthogonal
map. For examle, the map $A$ from Example~\ref{ex:3111} can be written as the
product $\sqrt{2}QT$ where
\[
	Q = \frac{1}{\sqrt{2}}
	\left(
		\begin{matrix}
			1 & 1 \\
			-1 & 1
		\end{matrix}
	\right)
\]
is orthogonal and
\[
	T = 	
	\left(
		\begin{matrix}
			1 & 0 \\
			2 & 1
		\end{matrix}
	\right)
\]
is a transvection. In fact this can always be done, although it is not always
possible to choose the transvection in $\ST$ (see Example~\ref{ex:5111}).

\begin{lemma} \label{lem:orthogonal_times_shear_dim2}
	Every $A \in \SL(2,\R)$ can be written as $A=QT$, where $Q\in \SO(2,\R)$ and $T \in \GT$.
\end{lemma}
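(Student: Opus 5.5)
We need to find $Q \in \SO(2,\R)$ with $Q^{-1}A \in \GT$. In dimension two, a matrix $T\in \SL(2,\R)$ is a transvection (or the identity) exactly when $\trace T = 2$. Indeed, its characteristic polynomial is then $(\lambda-1)^2$, so $(T-I)^2=0$ by Cayley--Hamilton, and $\rank(T-I)\le 1$ unless $T=I$. Since $\det(Q^{-1}A)=1$ automatically, the whole problem reduces to choosing a rotation $Q$ with
\[
\trace(Q^{-1}A)=2 .
\]

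To solve this, parametrize $Q=R_\theta$ as the rotation by angle $\theta$ and write $A=\begin{pmatrix} a&b\\ c&d\end{pmatrix}$. A direct computation gives
\[
\trace(R_\theta^{-1}A)=(a+d)\cos\theta+(b-c)\sin\theta .
\]
As $\theta$ varies, this expression ranges over the interval $[-r,r]$, where $r=\sqrt{(a+d)^2+(b-c)^2}$. It therefore suffices to show that $r\ge 2$.

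The remaining step, and the only one with content, is the inequality $r \geq 2$. I would expand
\[
(a+d)^2+(b-c)^2=a^2+b^2+c^2+d^2+2(ad-bc)=\|A\|_F^2+2 .
\]
Since the singular values $\sigma_\pm$ of $A$ satisfy $\sigma_+\sigma_-=1$, we get $\|A\|_F^2=\sigma_+^2+\sigma_-^2\ge 2$ by AM--GM. Hence $r^2\ge 4$.

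By continuity (the intermediate value theorem), some $\theta$ then achieves trace exactly $2$. Setting $Q=R_\theta$ and $T=Q^{-1}A$, we obtain $A=QT$ with $T\in\GT$, where $T$ may be the identity, which the paper's convention counts as a transvection. The equality case $r=2$ occurs exactly when $A$ is itself a rotation, and then $T=I$.
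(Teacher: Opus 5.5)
Your proof is correct, and it takes a different route from the paper's. The paper argues geometrically. Since $\sigma_-(A)\leq 1\leq\sigma_+(A)$, there is a unit vector $v$ with $\|Av\|=1$. Choosing $U,V\in\SO(2,\R)$ with $Ve_1=v$ and $Ue_1=Av$, the matrix $S=U^{-1}AV$ fixes $e_1$ and has determinant one, so $S=\Sigma_t$ for some $t$, and then $A=(UV^{-1})(VSV^{-1})$.

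You instead use the fact, special to $2\times 2$ matrices, that an element of $\SL(2,\R)$ lies in $\GT$ exactly when its trace is $2$. This reduces everything to the scalar equation $\trace(R_\theta^{-1}A)=2$, which is solvable because its amplitude satisfies $r^2=\|A\|_F^2+2\geq 4$. If $R_\theta$ denotes counterclockwise rotation, the coefficient of $\sin\theta$ is $c-b$ rather than $b-c$; this does not matter, since only $r$ enters.

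What your version buys is explicitness. Writing the trace as $r\cos(\theta-\theta_0)$, it gives $Q$ in closed form, $\theta=\theta_0\pm\arccos(2/r)$. It also shows there are exactly two such factorizations unless $A$ is a rotation, in which case there is one, with $T=I$.

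What the paper's version buys is the geometry of $T$: its fixed line is the direction $v$ on which $A$ acts isometrically. This is what generalizes. Lemma~\ref{lem:orthogonal_times_shears_dimn} is proved by the same device, applied to a hyperplane on which $A$ has unit Jacobian. Your trace criterion has no counterpart for $d\geq 3$, because being a transvection in $\SL(d,\R)$ is no longer a single scalar condition.
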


\begin{proof}
	Since $\det A = \sigma_+(A) \cdot \sigma_-(A)= 1$, there exists a unit
	vector $v$ such that $\|Av\| = 1$. Let $U, V \in \SO(2,\R)$ be such that
	$Ve_1 = v$ and $Ue_1 = Av$. Then, writing $S = U^{-1} A V$ we have $S e_1 = e_1$.
	Moreover, $\det S = \det V^{-1} \det A \det U = 1$. Hence 
	\[S = \left(
			\begin{matrix}
				1 & t \\
				0 & 1
			\end{matrix}
	\right)\] for some $t \in \R$. Now, $A = U S V^{-1} = (UV^{-1})(V S
	V^{-1})$, so the proof follows by taking $Q = U V^{-1}$ and $T = V S
	V^{-1}$.
\end{proof}

The point of Lemma \ref{lem:orthogonal_times_shear_dim2} is that it allows us to
consider deformations of $A$ by considering maps of the form
\begin{equation} \label{eq:f_me_general_2d}
	f = A \circ P \circ S_{c,n,t} \circ P^{-1},
\end{equation}
where
\begin{equation}
	S_{c,n,t}(x,y) = (x+ts_{c,n}(y), y)
\end{equation}
is shorthand for the map $S_{1,2,c,n,t}$ from section
\ref{sec:many_transvections}, and $P\in \SL(2,\Z)$ and $t$ are chosen so 
that $ P \Sigma_tP^{-1}$ is either equal to $T$ (if $T$ has a rational shear
direction), or close to $T$ (if it has not). 
The advantage of considering maps of this form is that it is much easier to
control how many pre-images fall into a "bad" region (where the conorm of the
derivative is far away from $\sqrt{|\det A|}$).

Lemma~\ref{lem:orthogonal_times_shear_dim2} has a natural generalization in
higher dimensions. 

\begin{lemma} \label{lem:orthogonal_times_shears_dimn}
	Every $A \in \SL(d,\R)$ can be written as $A = Q T_{d-1} \cdots T_{1}$,
	where $Q \in \SO(d,\R)$ and $T_i \in \GT$ for $i = 1, \ldots, d-1$.
	Moreover, the transvections $T_i$ can be chosen so that
	\begin{equation*}
		\image(T_{j}-I) \subset \ker(T_{i}-I)
	\end{equation*}
	for every $1\leq j < i \leq d-1$.
\end{lemma}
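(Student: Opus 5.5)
Induction on $d$ is the natural approach, generalizing the proof of Lemma~\ref{lem:orthogonal_times_shear_dim2}. The idea is to find a flag of subspaces on which $A$ acts "isometrically up to shear", that is, an orthonormal basis $v_1,\dots,v_d$ and an orthonormal basis $w_1,\dots,w_d$ such that, writing $V=(v_1|\cdots|v_d)$ and $U=(w_1|\cdots|w_d)$, the matrix $S=U^{-1}AV$ is upper unitriangular. Then $A=(UV^{-1})(VSV^{-1})$, with $Q=UV^{-1}$ (arranged to lie in $\SO(d,\R)$ by flipping signs of columns, since $\det S=1$ and $\det A=1$ force $\det U=\det V$ once both are orthogonal). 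It remains to factor a unitriangular $S$ as a product of $d-1$ transvections with the nesting property, and conjugation by $V$ preserves everything, since $\GT$ is closed under conjugation and the inclusions $\image(T_j-I)\subset\ker(T_i-I)$ are conjugation-invariant.

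To construct the bases, the plan is to choose $v_1$ a unit vector with $\|Av_1\|=1$, which exists by the intermediate value theorem because $\sigma_-(A)\le1\le\sigma_+(A)$. Set $w_1=Av_1$. Then one passes to the quotient: let $A'$ be the map $v_1^\perp\to w_1^\perp$ given by $A$ followed by orthogonal projection onto $w_1^\perp$. In the bases $(v_1,\dots)$ and $(w_1,\dots)$, $A$ has block form $\begin{pmatrix}1&*\\0&A'\end{pmatrix}$, so $\det A'=\pm1$, and after identifying $v_1^\perp, w_1^\perp$ with $\R^{d-1}$ isometrically one may assume $A'\in\SL(d-1,\R)$. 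Induction on $A'$ (with the trivial base case $d=1$) then yields the orthonormal bases for the remaining $d-1$ vectors with $A'$ unitriangular, hence $S$ is unitriangular with ones on the diagonal.

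For the factorization of a unitriangular $S=I+N$ ($N$ strictly upper triangular), the plan is to peel off columns. For $k=2,\dots,d$ let $T'_k=I+n_k e_k^T$, where $n_k$ is the $k$-th column of $N$, supported in coordinates $<k$. Each $T'_k$ is a transvection (or the identity) because $\langle n_k,e_k\rangle=0$, and $\image(T'_k-I)\subset\langle e_1,\dots,e_{k-1}\rangle$ while $\ker(T'_k-I)=e_k^\perp$. For $j<k$ one has $\image(T'_j-I)\subset\langle e_1,\dots,e_{j-1}\rangle\subset e_k^\perp=\ker(T'_k-I)$, so $(T'_j-I)(T'_k-I)=0$ whenever... the ordering must be checked. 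Rather, $(T'_k-I)(T'_j-I)=n_k e_k^Tn_j e_j^T=0$ because $e_k^Tn_j=0$ for $j<k$. Hence $T'_dT'_{d-1}\cdots T'_2=I+\sum_k n_ke_k^T=S$, since all cross terms vanish in this order. Relabeling $T_{i}=VT'_{i+1}V^{-1}$ gives $S$'s conjugate as $T_{d-1}\cdots T_1$ with $\image(T_j-I)\subset\ker(T_i-I)$ for $j<i$.

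The main obstacle is bookkeeping in the inductive step: ensuring that the quotient map $A'$ really gives unitriangularity of $S$ in the combined basis (the block structure uses $w_1\perp$ images of the $v_i$ only after projection, which is exactly what upper triangularity allows), and fixing orientations so that $Q\in\SO(d,\R)$ rather than $\Ort(d)$.
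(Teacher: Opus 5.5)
Your argument is correct, and it takes a genuinely different route from the paper's.

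The paper inducts directly on the transvection factorization. It picks a hyperplane $L$ on which $A$ has unit Jacobian and rotates so that $C=U^{-1}AV$ preserves $H=\vspan(e_1,\dots,e_d)$. It then rotates the off-diagonal part $v$ of the last column of $C$ onto $te_1$ and applies the inductive hypothesis to the upper-left block. Each step extracts one new transvection $T_d=V\hat P^{-1}\Sigma_t\hat PV^{-1}$, and the nesting is verified through $\image(T_{d-1}-I)\subset L\subset\ker(T_d-I)$.

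You instead peel off a line rather than a hyperplane, and you postpone the transvections altogether. You first establish the normal form $A=USV^{-1}$ with $U,V$ orthogonal and $S$ upper unitriangular. Only then do you split $S=(I+n_de_d^T)\cdots(I+n_2e_2^T)$ column by column, and you have the correct order of factors.

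The two choices are dual. For $\det A=1$ and a unit vector $n$, the Jacobian of $A$ restricted to $n^\perp$ equals $\|A^{-T}n\|$, so the paper's hyperplane is exactly your unit-stretch vector chosen for $A^{-T}$. This duality also supplies the intermediate-value justification for the existence of $L$, which the paper only asserts.

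Your route separates the geometric input, an intermediate value argument on the unit sphere, from a purely algebraic factorization. It also makes the nesting condition transparent: the condition is exactly $(T_i-I)(T_j-I)=0$ for $j<i$, which is why the ordered product collapses to $I+\sum_i(T_i-I)$. The paper's route stays within the language of transvections throughout and reuses Lemma~\ref{lem:orthogonal_times_shear_dim2} as its base case. The price is more conjugation and orientation bookkeeping at each inductive step.
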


The proof uses the following notational convention: If $M$ is a $d \times d$
matrix, then $\hat{M}$ is the $(d+1) \times (d+1)$ matrix given by 
\begin{equation*}
	\hat{M} = \left(
	\begin{array}{c|c}
		M & \begin{matrix} 0 \\ \vdots \\ 0 \end{matrix} \\ \hline
		0 \cdots 0 & 1
	\end{array}
\right).
\end{equation*}

\begin{proof}[Proof of lemma \ref{lem:orthogonal_times_shears_dimn}]
	We argue by induction on $d$. Lemma \ref{lem:orthogonal_times_shear_dim2}
	says that the conclusion is true for $d=2$. Now suppose it is true for
	$d$ and let us deduce that it is then true for $d+1$.

	Since $\det A = 1$, there exists some subspace $L \subset \R^{d+1}$ of
	dimension $d$ such that the Jacobian of $A \vert_L : L \to A(L)$ 
	is equal to $1$. Let $H = \vspan( e_1, e_2, \ldots, e_{d})$ and
	take $U, V \in \SO(d+1,\R)$ be such that $VH = L$ and $UH = AL$. Then
	$C = U^{-1}AV$ leaves $H$ invariant. Upon
	possibly inverting the sign of the last and one more column of U, we may
	(and do) assume that $C \vert_H$ is orientation preserving. 
	Since the Jacobian of
	$A \vert_L$ is $1$, so is the Jacobian of $C \vert_H$. 
	Hence $C$ must be of the form
		\begin{equation*}
			\left( 
				\begin{array}{c|c}
				B & 
				\begin{matrix}
					v_1 \\ \vdots \\ v_{d}
				\end{matrix} \\ \hline
				\begin{matrix} 0 & \cdots & 0 \end{matrix} & 1
		\end{array}
	\right)
		\end{equation*}
		where $B \in \SL(d,\R)$ and $v = (v_1, \ldots,
		v_{d})^{T} \in \R^{d}$.

Let $W \in \SO(d,\R)$ be such that $Wv = t e_1$ for some $t\in \R$. Then
\begin{equation*}
	\hat{W} C = \left(
	\begin{array}{c|c}
		WB & \begin{matrix} t \\ 0 \\ \vdots \\ 0 \end{matrix} \\ \hline
		\begin{matrix} 0 & \cdots & 0  \end{matrix} & 1
	\end{array}
\right)
\end{equation*}

The matrix $WB$ is an element of $\SL(d,\R)$ so, by our inductive hypothesis,
it can be written as $P  S_{d-1} \cdots S_1$ where $P \in \SO(d,\R)$ and $S_i
\in \GT$
for $1 \leq i \leq d-1$.  Thus 
\begin{equation*}
	\hat{W} C = \left(
		\begin{array}{c|c}
			P \cdot  S_{d-1} \cdot  \ldots \cdot  S_1 & \begin{matrix} t \\ 0 \\ \vdots \\ 0
			\end{matrix} \\ \hline
				\begin{matrix} 0 & \cdots & 0 \end{matrix} & 1
			\end{array}
		\right)
		= \Sigma_t  \cdot \hat{P} \cdot \hat{S}_{d-1} \cdot  \ldots \cdot  \hat{S}_1,
	\end{equation*}
	where $\Sigma_t = I_{d+1}+t E_{1, d+1}$. Hence $A$ can be written as
	\begin{equation*}
		A  = U C V^{-1} =  U \hat{W}^{-1}\Sigma_t \hat{P}\hat{S}_{d-1} \cdots \hat{S}_1 V^{-1} 
		= Q T_d \cdots T_1,
	\end{equation*}
	where
	\begin{align*}
		Q & =  U \hat{W}^{-1} \hat{P} V^{-1}, \\
		T_d &= V \hat{P}^{-1} \Sigma_t \hat{P} V^{-1}, \text{ and} \\ 
		T_i &= V \hat{S}_i V^{-1} \text{ for } 1 \leq i \leq d-1.
	\end{align*}

	Suppose now that the $S_j$ were chosen so that
	\[
		\image(S_{j}-I) \subset \ker(S_i-I)
	\]
	for every $1 \leq j< i \leq d-1$. Then the same holds for
	$\hat{S}_i$ and, consequently, for $T_i=V\hat{S}_iV^{-1}$. So, to complete
	the proof, it suffices to prove that
	\[
		\image(T_{d-1}-I) \subset
		\ker(T_d-I).
	\]
	To that end, observe that $\hat{P}H=H$, so
	\[
		\ker(T_d-I) = V\hat{P}^{-1}\ker(\Sigma_t - I) \supset
		V\hat{P}^{-1}H = VH = L.
	\]
	On the other hand, $\image(\hat{S}_{d-1}-I) \subset H$, so
	\[
		\image(T_{d-1}-I) =
		V\image(\hat{S}_{d-1}-I) \subset VH = L
	\]
	as required.
\end{proof}

\subsection{Bounding the number of preimages in the critical strip}

In order for a map $f$ of the form in \eqref{eq:f_me_general_2d} (or its higher
dimensional analogue) to be colectively expanding, we want
$t$ not to be too big, $c$ not to be too small, and also that the number of
preimages of any point by $f$ is as small as possible. We shall see that this
can be controlles by analyzing the arithmetic properties of the shear plane
relative to $A$.

\begin{definition}
	Let $M \in \Mdk$ be a rank $k$ matrix. We define the \emph{index} of $\ind(M)$ of $M$ to be the product of its invariant factors. Equivalently, $\ind(M)$ is the greatest common divisor of all $k \times k$ minors of $M$.
A matrix vector of index one is called primitive.
\end{definition}

Here, a vector $v \in \Z^d$ is identified with an element of $\Mdo$, so 
\[
\ind(v) = \max \{k \in \N: v = k w \text{ for some } w \in \Zd\},
\]
or, equivalently, the greatest common divisor of its components. A primitive vector is a vector which is \emph{visible from the origin}, i.e. the line segment from the vector to the origin in $\Rd$ does not contain other points in $\Zd$ except the origin and itself. More generally, a matrix $M \in \Mdk$ has index $n$ means that the image of $[0,1)^k$ under $M$ intersects $\Zd$ in $n$ points.  

Let $E$ be the $d \times (d-1)$ matrix with ones in the diagonal and zeros
elsewhere. In other words, $E$ acts on $d \times d $ matrices on the right by
deleting the last column.

\begin{lemma} \label{lem:preimages_in_critical_stripe_dimd}
	Let
	\begin{equation}
		f = A \circ P \circ S_{c,t} \circ P^{-1}
	\end{equation}
	for some $P \in \GL(n, \Z)$ and some $c \leq 1/\tau_d$. Then, given any $x
	\in \Td$,  the cardinality of $f^{-1}(x)\cap P\Delta_c$ is at most $\ind(APE)$.

\end{lemma}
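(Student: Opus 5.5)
The plan is to reduce the count to a purely linear, arithmetic question about the lattice $A^{-1}\Zd/\Zd$, and then identify the resulting group-theoretic quantity with $\ind(APE)$. I read $S_{c,t}$ as the shear $x\mapsto x+t\,s_{c,n}(x_d)e_1$ and $\Delta_c=\Delta_c^d=\{|x_d|<c/2\}$ as its critical strip.

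\emph{Step 1: remove the shear.} The shear $S_{c,t}$ does not change the last coordinate, so it maps $\Delta_c$ onto itself. Hence $h=P\circ S_{c,t}\circ P^{-1}$ is a diffeomorphism of $\Td$ preserving $P\Delta_c$. Therefore $h$ restricts to a bijection
\[
f^{-1}(x)\cap P\Delta_c \;\longrightarrow\; A^{-1}(x)\cap P\Delta_c ,
\]
and it suffices to bound the number of preimages of $x$ under the linear map $A$ that lie in the strip $P\Delta_c$.

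\emph{Step 2: describe the strip by an integer covector.} Let $w\in\Zd$ be the last row of $P^{-1}$; it is primitive because $P\in\GL(d,\Z)$. Then
\[
P\Delta_c=\{y\in\Td:\ |\langle w,y\rangle|_{\T}<c/2\}.
\]
The $A$-preimages of $x$ have the form $y_0+A^{-1}k$, where $k$ runs over $\Zd/A\Zd$, a group with $|\det A|$ elements. Consider the homomorphism
\[
\theta:\Zd/A\Zd\to\T,\qquad \theta(k)=\langle w,A^{-1}k\rangle \bmod 1 .
\]
Its image is a cyclic subgroup $\frac1N\Z/\Z$. Each fibre of $\theta$ has $|\ker\theta|=|\det A|/N$ elements. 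A preimage lies in the strip exactly when $\langle w,y_0\rangle+\theta(k)$ falls in an open arc of length $c$. The admissible values of $\theta(k)$ form a coset of $\frac1N\Z$, so if $c\le 1/N$ at most one value is hit. This gives at most $|\ker\theta|$ preimages in the strip. I expect the hypothesis $c\le 1/\tau_d$ is exactly what guarantees $c\le 1/N$, with $\tau_d$ an upper bound for the order $N$; I would first check this against the definition of $\tau_d$. If the definition does not give this directly, I would use $N\le|\det A|$ as the bound.

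\emph{Step 3: identify $|\ker\theta|$ with $\ind(APE)$.} This is the main step. The kernel of $\theta$ consists of the points of $A^{-1}\Zd/\Zd$ lying on the subtorus $\{\langle w,y\rangle=0\}$. The first $d-1$ columns of $P$, namely $PE$, form an integer basis of $w^\perp\cap\Zd$, since $P$ is unimodular. So this subtorus is $PE(\R^{d-1})/\Zd$, with fundamental domain $PE[0,1)^{d-1}$.

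Applying $A$ gives a bijection between $\ker\theta$ and the integer points of $APE[0,1)^{d-1}$, modulo identifications coming from $A(w^\perp\cap\Zd)$. By the characterization of the index recalled after the definition, the number of these points is $\ind(APE)$.

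To make this rigorous I would use Smith normal form: write $APE=U\,\mathrm{diag}(a_1,\dots,a_{d-1})\,V$ with $U,V$ unimodular. Then count $\bigl(\Zd\cap APE\,\R^{d-1}\bigr)/APE\,\Z^{d-1}$, which has order $a_1\cdots a_{d-1}=\ind(APE)$, and check that this quotient is isomorphic to $\ker\theta$ via $k\mapsto A^{-1}k$.

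Combining the three steps, $\#\bigl(f^{-1}(x)\cap P\Delta_c\bigr)\le|\ker\theta|=\ind(APE)$.
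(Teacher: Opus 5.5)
Your proposal is correct and follows essentially the paper's route. You remove the shear, use that the transverse coordinates $\langle w,y\rangle$ of the $A$-preimages are spaced by multiples of $1/\tau_d$ so that at most one level meets the strip, and identify the number of preimages on a level with $[\Zd\cap APE(\R^{d-1}):APE(\Z^{d-1})]=\ind(APE)$ via Smith normal form. The only difference is that you phrase this through the homomorphism $\theta$, whereas the paper conjugates by $P$ and counts $(AP)^{-1}(x)\cap\Delta_c$.

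The step you left open is immediate and is exactly the paper's key observation: $\tau_d A^{-1}$ is an integer matrix (i.e.\ $\tau_d$ is the exponent of $\Zd/A\Zd$), so $\tau_d\theta(k)=0$ and $N\mid\tau_d$. Your fallback $N\le|\det A|$ would not suffice, since it only gives the lemma under the stronger hypothesis $c\le 1/|\det A|$.
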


\begin{proof}
	Since $P$ maps $\Delta_c$ bijectively onto $P\Delta_c$ and since
	$S_{c,t}$ leaves $\Delta_c$ invariant, the cardinality of $f^{-1}(x)
	\cap P\Delta_c$ equals the cardinality of $(AP)^{-1}(x)\cap \Delta_c$.
	Note that $\tau_d (AP)^{-1}$ is an integer matrix. In particular,
	$(AP)^{-1}(\zero)$ is contained in $\tau_d^{-1}\Zd$. Therefore
	$(AP)^{-1}(x)$ is a t translation of a subset of $\tau_d^{-1}\Zd$. But
	the height of $\Delta_c$ is $2c<\tau_d$, so the cardinality of
	$(AP)^{-1}(x)$ is either zero or equal to the number
	\[
		k := \# (AP)^{-1}(\zero) \cap \T^{d-1}\times \{0\}.
	\]
	 
	Now let us consider $A$ and $P$ as maps on $\Rd$ rather than $\Td$. Then
	$k$ is equal to the index of $AP(\Z^{d-1}\times \{0\})$ as a subgroup of
	$\Zd \cap P(\R^{d-1}\times \{0\})$. That is,
	\[k = [\Zd \cap APE(\R^{d-1}): APE(\Z^{d-1})].\]

	Let $LDR$ be the Smith decomposition of $APE$. 
	Then $L$ maps
	$\Z^{d-1}\times\{0\}$ bijectively onto $\Zd \cap APE(\Rd)$. Hence
	\[k=[\Zd \cap DR(\R^{d-1}): DR(\Z^{d-1})].\]
	But $DR(R^{d-1}) = \R^{d-1} \times \{0\}$ and $R(\Z^{d-1}) =
	R(\Z^{d-1})$, so
	\[k = [\Z^d \times \{0\}: D(\Z^{d-1})] = \ind(D) = \ind(APE).\]
\end{proof}

\begin{remark} \label{rmk:alternative_calc_of_ind}
The index in Lemma~\ref{lem:preimages_in_critical_stripe_dimd}
can be computed directly from the normal vector to
the invariant hyperplane, without explicitly
constructing the matrix $P$.

Indeed, let $v=P^{-T}e_d$, so that $v$ is a primitive
integer normal vector to
$P(\R^{d-1}\times\{0\})$. Then
\[
\ind(APE)=\gcd\bigl(\adj(A)^T v\bigr),
\]
where the gcd is taken over the coordinates of the
vector and $\adj(A)=(\det A)A^{-1}$.

To see this, observe that the coordinates of
$\adj(A)^T v$ are, up to signs, the maximal minors
of $APE$, whose gcd is precisely $\ind(APE)$.
In particular, if $S=I+tuw^T,$ where $u,w\in\Z^d$, $\langle u, w \rangle =0$, and $w$ is primitive,
then the invariant hyperplane of $S$ is $w^\perp$,
and the corresponding index is 
$\gcd\bigl(\adj(A)^T w\bigr).$ 
\end{remark}

\begin{lemma} \label{lem:primitive_density}
	Let  \[ V = \{ v \in \Zd: \ind(Av) = \tau_1 \}. \]
	Denote by $p:\Rd \to \bP(\Rd)$ the canonical projection.
	Then $p(V)$ is dense in $\bP(\Rd)$.
\end{lemma}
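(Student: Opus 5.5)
The plan is to reduce to the Smith normal form of $A$ and then approximate directions by integer vectors satisfying a congruence condition. I read $\tau_1\mid\tau_2\mid\cdots\mid\tau_d$ as the invariant factors of $A$, all nonzero since $\det A\neq0$. Write $A=LDR$ with $L,R\in\GL(d,\Z)$ and $D=\operatorname{diag}(\tau_1,\ldots,\tau_d)$.

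First I will identify $V$ concretely. Since $L$ is unimodular, it preserves the gcd of the coordinates of an integer vector. So for $v\in\Zd$ and $w=Rv$ we get
\[
\ind(Av)=\gcd(\tau_1w_1,\ldots,\tau_dw_d)=\tau_1\gcd\Bigl(w_1,\tfrac{\tau_2}{\tau_1}w_2,\ldots,\tfrac{\tau_d}{\tau_1}w_d\Bigr).
\]
In particular $\tau_1$ always divides $\ind(Av)$, and equality holds as soon as $w_1$ is coprime to $N:=\tau_d/\tau_1$. This coprimality forces the inner gcd to be $1$, because every $\tau_i/\tau_1$ divides $N$. Hence
\[
V\supset R^{-1}W,\qquad W=\{w\in\Zd: w_1\equiv 1 \pmod N\}.
\]
The full characterization of $V$ is not needed; only this inclusion is.

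Second, I will transfer density through $R^{-1}$. The linear map $R^{-1}$ is invertible, so it induces a homeomorphism of $\bP(\Rd)$ satisfying $p(R^{-1}w)=R^{-1}\cdot p(w)$. It therefore suffices to show that $p(W)$ is dense in $\bP(\Rd)$.

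Third, I will prove that density directly. Fix a unit vector $u\in\Rd$, and for each $k\in\N$ let $z_k\in\Zd$ be a coordinatewise rounding of $ku$, so that $\|z_k-ku\|\le\sqrt d$. Set $w_k=e_1+Nz_k\in W$. Then
\[
\Bigl\|\frac{w_k}{Nk}-u\Bigr\|\le\frac{1+N\sqrt d}{Nk}\longrightarrow0,
\]
so $p(w_k)\to p(u)$, and in particular $w_k\neq0$ for large $k$. Since $u$ was arbitrary, $p(W)$ is dense, and hence so is $p(V)$.

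I expect the main obstacle to be the first step. One has to notice that the gcd of $Av$ is governed by $Rv$ through the invariant factors, and then pick a congruence condition that is simple enough to be stable under adding multiples of $N$. Once that reduction is in place, the approximation in the third step is routine.
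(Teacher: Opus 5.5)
Your first step makes a false claim, so the inclusion $V\supset R^{-1}W$ fails. Coprimality of $w_1$ with $N$ does not force $\gcd\bigl(w_1,\tfrac{\tau_2}{\tau_1}w_2,\ldots,\tfrac{\tau_d}{\tau_1}w_d\bigr)=1$. It only rules out the primes dividing $N$. For a prime $q\nmid N$, every factor $\tau_i/\tau_1$ is prime to $q$. Hence, when $\gcd(w_1,N)=1$, the inner gcd is exactly $\gcd(w_1,\ldots,w_d)$, that is, $\ind(AR^{-1}w)=\tau_1\gcd(w_1,\ldots,w_d)$.

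Concretely, take $A=\mathrm{diag}(1,2)$, so $N=2$. The vector $w=(3,3)$ has $w_1\equiv 1\pmod 2$, but $Aw=(3,6)$ has index $3\neq\tau_1$. If $N=1$, your $W$ is all of $\Zd$, and the claim would put $2e_1$ in $V$. Your third step inherits the problem: with $N=2$ and $z_k=(4,3)$ you get $w_k=(9,6)$, which is not primitive and whose image under $\mathrm{diag}(1,2)$ has index $3$.

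The missing ingredient is primitivity. This is exactly the second condition in the paper's proof: after reducing to $A$ diagonal with $\tau_1=1$, it takes $w$ primitive with $w_1$ coprime to $\tau_d$, and checks prime by prime that $Aw$ is primitive.

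Your argument can be repaired in one line. For $0\neq w\in W$, let $g=\gcd(w_1,\ldots,w_d)$. Then $w/g$ is primitive and $p(w/g)=p(w)$. Also $w_1/g$ is still coprime to $N$, since a common prime factor would divide $w_1$. By the corrected computation, $R^{-1}(w/g)\in V$, so $p(V)\supset R^{-1}\cdot p(W\setminus\{0\})$, and your second and third steps then finish the proof.

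With this fix your route coincides with the paper's. Your rounding construction $w_k=e_1+Nz_k$ (followed by passing to primitive parts) in fact supplies something the paper asserts without proof: the existence of primitive vectors in any prescribed direction whose first coordinate is prime to $\tau_d$.
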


\begin{proof}
Using the Smith Normal form decomposition $A = LDR$ and conjugating with $L$ if necessary, it is enough to consider the case in which $A$ is diagonal with invariant factors $\tau_1, \ldots, \tau_d$ on the diagonal.
By further dividing by $\tau_1$ if neccessary, it is enough to consider the case in which $\tau_1 = 1$. 

Consider some non-zero $x \in \Rd$ and choose a primitive 
\[
	w = (w_1, \ldots, w_d)^T \in \Zd
\]
such that $p(w)$ is arbitrarily close to $p(x)$ and $w_1$ is i
relatively prime to $\tau_d$. We claim that $Aw$ is primitive.
Indeed, suppose there is some prime $p$ dividing every component of $Aw$. 
But
\[
	Aw = (w_1, \tau_2 w_2, \ldots, \tau_d w_d)
\]
so, in particular, $p$ divides $w_1$. 
Now, since $w_1$ and $\tau_d$ are coprime, $p$ does not divide $\tau_d$. Hence $p$ does not divide any of the $\tau_i$. But $p$ divides all components of $Aw$, so $p$ must divide each of the $w_i$. But that contradicts the assumption of $w$ being primitive. 
\end{proof}

\subsection{Density of good shear planes}
We shall now see that, given any linear endomorphism $A$ with invariant factors
$(\tau_1, \ldots, \tau_d)$, not only is it is possible to approximate any 
transvection in $\GT$ whth one in $\ST$, but the choice can be made so that we
minimize the index in the hyperplane fixed by the shear.
More precisely, let 
\[
	\ST_A = \{ P \Sigma_t P^{-1}: P \in \SL(d,\Z), \ t \in \R, \ind(APE) = \tau_1
	\cdots \tau_{d-1}\}.
\]

\begin{lemma} \label{lem:density_of_A_transvections}
	The set $\ST_A$ is dense in $\GT$.
\end{lemma}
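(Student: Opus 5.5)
The plan is to reduce the density statement to a density statement about pairs of integer vectors, and then prove that by combining Lemma~\ref{lem:primitive_density} with Remark~\ref{rmk:alternative_calc_of_ind}. For $P\in\SL(d,\Z)$ we have
\[
P\Sigma_tP^{-1}=I+t\,u'w^T,\qquad u'=Pe_1,\quad w=P^{-T}e_d .
\]
Here $u',w$ are primitive integer vectors with $\langle u',w\rangle=0$. By Remark~\ref{rmk:alternative_calc_of_ind},
\[
\ind(APE)=\gcd\bigl(\adj(A)^Tw\bigr).
\]
A general transvection can be written $T=I+s\,uv^T$ with $u,v$ orthogonal unit vectors and $s\in\R$. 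The goal is therefore to find primitive integer vectors $u',w$ satisfying three conditions: $\langle u',w\rangle=0$, $\gcd(\adj(A)^Tw)=\tau_1\cdots\tau_{d-1}$, and $u'/\|u'\|$, $w/\|w\|$ arbitrarily close to $u$ and $v$ (up to a common sign). We then show every such pair arises from some $P\in\SL(d,\Z)$, and take $t=s/(\|u'\|\|w\|)$. Continuity of $(u',w,t)\mapsto I+tu'w^T$ then gives $P\Sigma_tP^{-1}$ close to $T$.

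\emph{Step 1: choosing $w$.} The invariant factors of $\adj(A)=(\det A)A^{-1}$, and also of its transpose, are the products $\prod_{j\neq i}\tau_j$. The smallest of these is $\tau_1\cdots\tau_{d-1}$. Applying Lemma~\ref{lem:primitive_density} to $\adj(A)^T$ in place of $A$ gives integer vectors $w$ with $\gcd(\adj(A)^Tw)=\tau_1\cdots\tau_{d-1}$ whose directions are dense in $\bP(\Rd)$. In fact its proof produces $w$ primitive. Independently, primitivity is forced: if $w=kw'$, then $\gcd(\adj(A)^Tw)=k\gcd(\adj(A)^Tw')\geq k\,\tau_1\cdots\tau_{d-1}$, so $k=1$. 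Pick such a $w$ with $p(w)$ close to $p(v)$.

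\emph{Step 2: choosing $u'$.} The lattice $\Lambda=w^\perp\cap\Zd$ has rank $d-1$ and spans $w^\perp$, so directions of primitive vectors of $\Lambda$ are dense in $\bP(w^\perp)$. Since $w$ is close in direction to $v$, the orthogonal projection of $u$ to $w^\perp$ is close to $u$. We take $u'\in\Lambda$ primitive with direction close to that projection. Because $\Lambda$ is saturated in $\Zd$, $u'$ is also primitive in $\Zd$.

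\emph{Step 3: realizing $(u',w)$ by some $P$.} Extend $u'$ to a basis $p_1=u',p_2,\dots,p_{d-1}$ of $\Lambda$. Since $w$ is primitive, there is $z\in\Zd$ with $\langle w,z\rangle=1$; set $p_d=z$. Then $p_1,\dots,p_d$ is a basis of $\Zd$, so the matrix $P$ with columns $p_i$ has determinant $\pm1$. By construction the last row of $P^{-1}$ is $w^T$, since $\langle w,p_i\rangle=\delta_{id}$. If $\det P=-1$, we replace $p_2$ by $-p_2$ when $d\geq3$. When $d=2$ we replace $p_2$ by $-p_2$ instead, which turns $w$ into $-w$; this sign is absorbed into $t$.

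I expect Step 1 to be the main point: identifying the index condition $\ind(APE)=\tau_1\cdots\tau_{d-1}$ with minimality of $\gcd(\adj(A)^Tw)$, and transferring Lemma~\ref{lem:primitive_density} to $\adj(A)^T$. This requires checking that the invariant factors of $\adj(A)$ are as claimed, which follows from the Smith decomposition $A=LDR$ since $\adj(A)=\adj(R)\adj(D)\adj(L)$. Steps 2 and 3 are standard lattice arguments.
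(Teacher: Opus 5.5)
Your proposal is correct and follows essentially the same route as the paper: you choose the normal vector via Lemma~\ref{lem:primitive_density} applied to $\adj(A)^T$ (the paper's $B$), take a primitive vector of the saturated lattice $w^\perp\cap\Zd$ near the projected shear direction, and complete the pair to a matrix in $\SL(d,\Z)$. Your direct basis-extension construction of $P$ (with the sign correction) is a repackaging of the paper's $R$, $K=R^{-1}$, $P=K\hat{M}$, and it merges the paper's separate cases $d=2$ and $d\geq 3$ into one argument.
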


\begin{proof}

Fix some $T \in \GT$ and $\epsilon> 0$. 
Write $T = Q \Sigma_t Q^{-1}$ for some $Q \in \SO(d, \R)$. 
Denote by $q_i = Q e_i$ the columns of $Q$. 
Let $B = \det A (A^{-1})^T$. Then $B$ has invariant factors
\[\sigma_1 | \ldots | \sigma_d \]
where $\sigma_i = |\det A|/\tau_{d+1-i}$. Hence, by Lemma~\ref{lem:primitive_density}, 
we can find a primitive $u \in \Zd$
with 
\[\left\| \frac{u}{\|u\|} - q_d \right\| < \epsilon,\]
such that $\ind(Bu) = \sigma_1 =  \tau_1 \cdots
\tau_{d-1}$. Since $u$ is primitive, we can find an element $R \in SL(d, \Z)$
having $u^T$ as its last row. That is, $R^T e_d = u$.

Let $K = R^{-1}$. Then 
\[u^T KE = e_d^T RKE = e_d^T E = (0, \ldots, 0).\]
That is, $u$ is perpendicular to all but the last columns of $K$, or, stated
differently, $KE$ is a primitive rank $d-1$ matrix whose columns span the
orthogonal complement of $\vspan(u)$. 

Let $\tu = u/\|u\|$ and denote by $r = q_1 - \langle q_1,\tu\rangle \tu$ the orthogonal
projection of $q_1$ to $u^\perp = \image(KE)$. 
Then
\[
	\|r-q_1\| = |\langle q_1, \tu \rangle| = |\langle q_1, \tu-q_d \rangle| \leq
	\|\tu-q_d\| < \epsilon.
\]
We now consider the cases $d=2$ and $d \geq 3$ separately.

{\bf Case $d=2$:}
In this case, $r = \lambda K e_1$ for some $\lambda$. Setting $s = t
\lambda/\|u\|$, $S = K \Sigma_s K^{-1}$ we get
\[S = I + t\lambda K e_1 \tu^T = I + t r \tu^T, \]
so 
\[S-T = t \left[ (r-q_1) \tu^T + q_1 (\tu-q_d)^T \right].\]
Recall that the norm of a rank one matrix $x y^T$ is just the product of the
norms of $x$ and $y$. Hence
\[\|S-T\| \leq |t| \left(\|r-q_1\| + \| \tu-q_d \|\right) < 2|t| \epsilon .\]

{\bf Case $d \geq 3$:}
Choose a primitive vector $v \in \Z^{d-1}$ so that
\[\|\lambda KEv -r \| < \epsilon\]
for some $\lambda \in \R$.  Combining this with the previous estimate and
writing $w = KEv$ we get
\[
\| \lambda w - q_1 \| < 2 \epsilon.
\]
Choose $M \in \SL(d-1,\Z)$ having $v$ as its first
column and take $P = K \hat{M}$. 
Then $PE = KEM$, so in particular $Pe_1 = w$  and 
$(P^{-1})^T e_d = (K^{-1})^Te_d = u$. 
Taking
$s=t\lambda/\|u\|$ and $S = P \Sigma_s P^{-1}$ we get
\[S = I + t \lambda w \tu^T. \]
Hence
\begin{align}
	S-T & = t\lambda w \tu^T - t q_1 q_d^T \\
	    & = t \left[ (\lambda w \tu^T- q_1 \tu^T) + (q_1 \tu^T-q_1 q_d^T)
	    \right],
\end{align}
and
\[\|S-T\| \leq |t| \left( \|\lambda w - q_1\| + \|\tu-q_d\| \right) <
3|t|\epsilon.\]
Finally, note that 
\[
	\ind(APE) = \ind(AKEM) =
	\ind(AKE) = \ind(Bu) = \tau_1 \cdots \tau_{d-1},
\]
so $S \in \ST_A$. As $\epsilon$ was chosen arbitrarily, this proves the 
density of $\ST_A$ in $\GT$.


\end{proof}

\subsection{Examples of collectively expanding maps}

We now illustrate how to use the tecniques developed in Section
\ref{sec:fewer_transvections} to produce explicit examples of collectively
expanding maps. 

Recall that the Frobenius norm of a matrix $A \in \MddR$ is given by
\[\|A \|_{F} = \sqrt{\trace(A^T A)}.\]
Recall also that the trace of $A^T A$ is the sum of the squares of the singular
values of $A$. In particular, for $A \in \MttR$, we have
\begin{equation} \label{eq:frobenius_2d}
\|A\|_F^2 = \|A\|^2 + \m(A)^2.
\end{equation}

\begin{example} \label{ex:5111}

	Consider the matrix
	\[A = \left(\begin{matrix} 5 & 1 \\ 1 & 1 \end{matrix} \right). \]
	It has determinant $4$ and eigenvalues $3 \pm \sqrt{5}$, so it induces an Anosov endomorphism on $\TT$.
	We can write $A$ as $2 Q S$ with
	\[Q = \left(\begin{matrix}\frac{2}{3} & \frac{\sqrt{5}}{3}\\-
	\frac{\sqrt{5}}{3} & \frac{2}{3}\end{matrix}\right)\]
	and $S = U \Sigma_t U^{-1}$, where $t = -\sqrt{5}$ and
			\[U = \frac{1}{\sqrt{150-12\sqrt{5}}} \left(\begin{matrix}-3 + 2 \sqrt{5} & -11\\11 & -3 + 2
	\sqrt{5}\end{matrix}\right)\]
	The first column of $U$ points in a direction which can be fairly 
	well approximated by $(2,15)$. (Using continud fractions, 
	one finds that $\frac{15}{2}$ is the second convergent of
	$U_{21}/U_{11}$.) 
	The pair $(2,15)$ has B\'{e}zout coefficients $-7$ and $1$, so
					\[P =\left( \begin{matrix} 2 & -1 \\ 15
					& -7 \end{matrix} \right)\]
	belongs to $\SL(2,\Z)$.
	The number $t/(2^2+15^2)$ (see Proposition 
	\ref{prop:properties_transvections}, (5)) is 
	well approximated by $t'=-1/102$.
	Let $T = P \Sigma_{t'} P^{-1}$. Then
	\[
		H := A T^{-1} = A P \Sigma_{-t'}P^{-1} = 
		\frac{1}{102} \left(\begin{matrix}135 & 152\\-153 & 136\end{matrix}\right)
	\]
	should not be far from $2$ times an
	orthogonal matrix. In fact we claim that
	\begin{equation} \label{eq:good_region_estimate_5111}
		\frac{99}{50} < \m(H) < \|H \| < \frac{101}{50}.
	\end{equation}
	Indeed, using \eqref{eq:frobenius_2d} together with th identity
	$\m(H) \cdot \|H \| = |\det (H)| = 4$ and direct computation of the Frobenius
	norm of $H$, we get
	\[
		\|H\|^2 + \frac{16}{\|H\|^2} = \|H\|_F^2 = 8 + \frac{1}{5202}.
	\]
	The function $\zeta(x) = x^2 + 16/x^2$ is strictly increasing on $(2, \infty)$
	and satisfies
	\[\zeta\left(\frac{101}{50}\right) > 8 + \frac{1}{5202}.\]
	Hence $\|H\|^2 < (101/50)^2$. The lower bound for $\m(H)$ in
	\eqref{eq:good_region_estimate_5111} then follows
	from the identity $\m(H)\cdot \|H\| = 4$.

	Now fix $c_0 = 3/14$ and define 
	\[f_n = A P S_{c_0,n,-t'}P^{-1}.\]
\begin{figure} 
	\centering
	\includegraphics[width=0.9\textwidth]{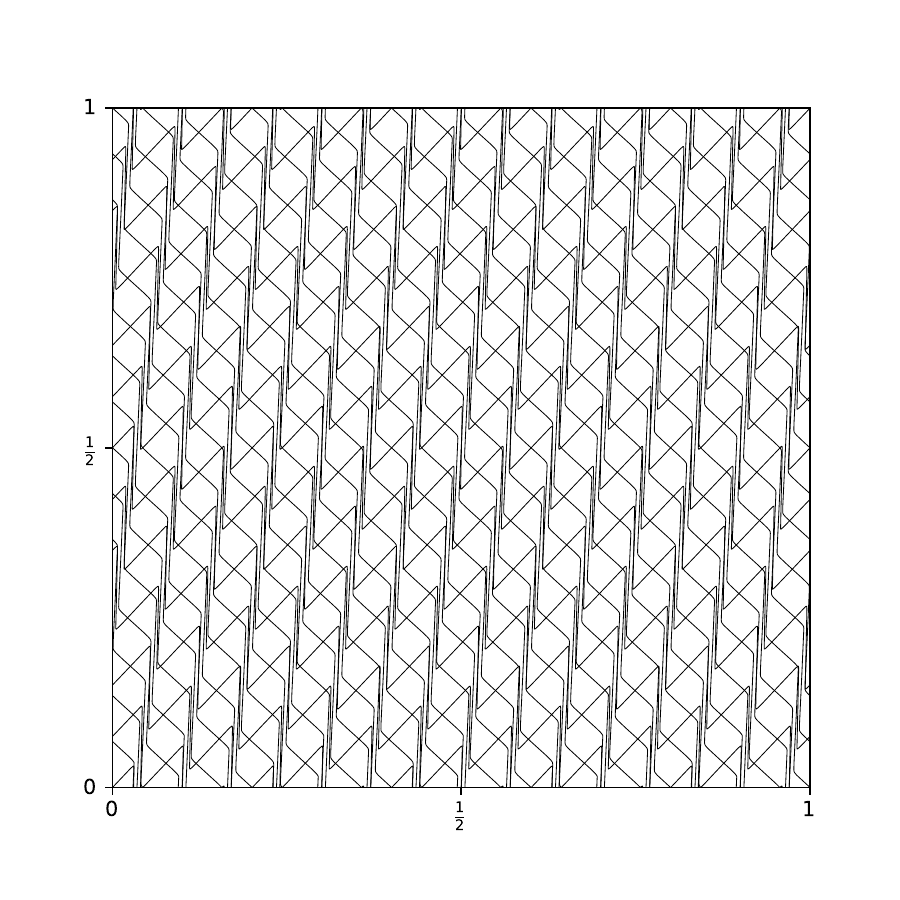}
	\caption{This picture shows the preimage of the grid 
	$\R \times \frac{1}{10}\Z \cup \frac{1}{10}\Z \times \R$ 
under the lift of $f_{n}$ with $c=1/10$ and $n = 50$.}
	\label{fig:3shears}
\end{figure}
	At any point $x \in \TT$, the derivative of $f_n$ is of the form
	\begin{equation} \label{eq:form_of_derivative}
	AP\Sigma_{-r}P^{-1} = H P\Sigma_{t'-r}P^{-1}
\end{equation}
	for some $r$ in the image of $t's_{n,c_0}$. 
	By Lemma~\ref{lem:sprime_convolution},
	\[
		r=t's'_{c_0,n}(x)\in
		\left[-\frac1{102},\frac{11}{3\cdot102}\right],
	\]
	so that
	\[
		t'-r\in\left[-\frac7{153},0\right].
	\]

	Let $\kappa = \|Pe_1\|^2 = 229$. By
	Proposition~\ref{prop:properties_transvections}, the norm of $M_{t'-r} =
	P \Sigma_{t'-r}P^{-1}$ satifies
	\[\|M_{t'-r}\| - \frac{1}{\|M_{t'-r}\|} = 229|t'-r| .\]
	We have seen that $|t'-r| \leq 7/153$.  It is also straightforward to
	check that
	\[ \frac{229 \cdot 7}{153} < 11 - \frac{1}{11}.\]
	Hence $\|M_{t'-r}\|<11$, so that
	\begin{equation} \label{eq:global_lower_conorm_bound_5111}
		\m(Df_n(x) ) \geq \m(H) \m(P\Sigma_{t'-r}P^{-1}) > \frac{99}{50}
	\cdot \frac{1}{11} = \frac{9}{50}
\end{equation}
	for every $x \in \TT$.

The vector $APe_1 = (25,17)^T$ is primitive, so by
Lemma~\ref{lem:preimages_in_critical_stripe_dimd}, any point
$x$ in $\TT$ has at most one pre-image under $f_n$ in $P\Delta_{c}$.

In order for $f_n$ to be collectively expanding, we need
\begin{equation} \label{eq:product_version_of_CE}
	\prod_{x' \in f_n^{-1}(x)} \m(Df_n (x')) >1
\end{equation}
for every $x \in \TT$. Since there is at most one $x'$ in $P\Delta_{c}$, it is
enough to choose $n$ sufficiently big so that $\m(Df(x'))$ is greater than, say,
$9/5$. For then, 
\eqref{eq:global_lower_conorm_bound_5111} gives that the left hand side of
\eqref{eq:product_version_of_CE} is greater than
\begin{equation} \label{eq:product_of_derivatives}
	\frac{9}{50} \cdot
	\left(\frac{9}{5}\right)^3,
\end{equation}
which is greater than $1$.

Now, the derivative of $f_n$ is of the form
\eqref{eq:form_of_derivative}, and
\[
	\begin{aligned}
		\m(HP\Sigma_{t'-r}P^{-1})
		&\geq \m(H)\m(P\Sigma_{t'-r}P^{-1})\\
		&>\frac{99}{50}\m(P\Sigma_{t'-r}P^{-1}).
	\end{aligned}
\]
Thus it suffices to take $|t'-r|$ small enough
so that
\[
	\m(P\Sigma_{t'-r}P^{-1})>\frac{10}{11}.
\]

We have already seen that
\[
	\frac{1}{\m(M_{t'-r})}-\m(M_{t'-r})=229|t'-r|.
\]
Thus, we need
\[
	229|t'-r|<
	\frac{11}{10}-\frac{10}{11},
\]
or
\[
	|t'-r|<\frac{21}{110\cdot229}.
\]
Choosing
\begin{equation}\label{eq:size_of_r}
	|t'-r|<\frac1{1200}
\end{equation}
will do the job.

Now, since $r=t's'_{c_0,n}(x)$ and $t'=-1/102$,
condition \eqref{eq:size_of_r} is equivalent to
\[
	\frac1{102}(1-s'_{c_0,n}(x))<\frac1{1200}.
\]
Taking $c_0=3/14$ and $c=5/21$ in
Lemma~\ref{lem:sprime_convolution}, we find
that $f_n$ is collectively expanding for every
$n\geq2306$.
Moreover, since $f_n$ has constant Jacobian $4$,
its smallest Lyapunov exponent satisfies
\[
\lambda_1(x,f_n)\geq
\frac14\log\left(\frac{85}{81}\right)>0
\]
for Lebesgue almost every $x$.
\end{example}

\begin{example} \label{ex:3D}
Consider the integer matrix
\[
A=
\begin{pmatrix}
4&-7&-8\\
1&2&-8\\
-1&1&5
\end{pmatrix}.
\]
It has determinant $27$, invariant factors
$(\tau_1,\tau_2,\tau_3)=(1,3,9)$, and characteristic
polynomial
\begin{equation}
p_A(t)=t^3-11t^2+45t-27.
\end{equation}
In particular, $A$ is hyperbolic, with one real and two complex eigenvalues
satisfying
\[0< \lambda_1 < 1 < |\lambda_2| = |\lambda_3|.\]
A direct calculation shows that $A$ can be written as
$A=3QT_1T_2$, with
\[
T_1=
\begin{pmatrix}
0&1&2\\
1&0&-2\\
-1&1&3
\end{pmatrix},
\quad
T_2=
\begin{pmatrix}
0&1&0\\
-1&2&0\\
0&0&1
\end{pmatrix},
\quad
Q=\frac13
\begin{pmatrix}
-2&-1&-2\\
1&2&-2\\
2&-2&-1
\end{pmatrix}.
\]
Here $T_1,T_2\in\ST$ and $Q\in\SO(3,\R)$.
Moreover,
\[
\image(T_2-I)\subset\ker(T_1-I).
\]

The invariant planes of $T_1$ and $T_2$ have primitive
normal vectors
\[
a_1=(1,-1,-2),\qquad a_2=(1,-1,0),
\]
respectively. We see that
\[
\adj(A)^Ta_1=(9,9,18),
\qquad
\adj(A)^Ta_2=(15,15,48),
\]
which are vectors of index $9$ and $3$, respectively
(see Remark~\ref{rmk:alternative_calc_of_ind}).
Recall from
Lemma~\ref{lem:preimages_in_critical_stripe_dimd}
that these numbers give upper bounds for the number
of preimages in the critical regions, where
$\mathfrak m(Df)$ may be small. In order to reduce
this number to the optimal value $\tau_1\tau_2=3$,
we replace $T_1$ by a nearby transvection whose
invariant plane has index $3$.

The shear directions of $T_1$ and $T_2$ are
\[
b_1=(1,-1,1),
\qquad
b_2=(1,1,0),
\]
respectively. Our replacement for $T_1$ should
therefore have a shear direction close to $b_1$,
while its invariant plane must still contain $b_2$.

Take
\[
u=(21,-20,20),
\qquad
v=-u\times b_2=(20,-20,-41),
\]
and define
\[
T_1'=I-\frac1{400}uv^T.
\]
Since $\langle u,v\rangle=0$, this is a transvection.
Moreover, $\langle v,b_2\rangle=0$, so that
\[
\image(T_2-I)\subset\ker(T_1'-I).
\]
The directions of $u$ and $v$ are
close to the original shear direction
$b_1$ and normal vector $a_1$, respectively.

Furthermore, $v$ is primitive and
\[
\adj(A)^Tv=(177,177,345).
\]
Since $\gcd(177,345)=3$, the invariant plane of
$T_1'$ has index $3$
(see Remark~\ref{rmk:alternative_calc_of_ind}).

Now fix
\[
\frac1{10}=c_0<c<\frac19.
\]
Choose $P_1,P_2\in\SL(3,\Z)$ such that
\[
(T_1')^{-1}
=P_1\Sigma_{1/400}P_1^{-1},
\qquad
T_2^{-1}=P_2\Sigma_1P_2^{-1},
\]
and write
\[
F_1=P_1S_{1,3,c_0,n,1/400}P_1^{-1},
\qquad
F_2=P_2S_{1,3,c_0,n,1}P_2^{-1}
\]
(see \eqref{eq:smooth_shears}). Take
\[
f_n=A\circ F_2\circ F_1.
\]
Thus $f_n$ is conservative, real analytic, and has
constant Jacobian $27$.

Let
\[
\Delta_c=
\{(x_1,x_2,x_3)\in\T^3:|x_3|<c/2\},
\]
where the third coordinate is represented in
$(-1/2,1/2]$. The strip $P_1\Delta_c$ is foliated by
planes parallel to $\ker(T_1'-I)$, and is therefore invariant
under $F_1$. Similarly, $P_2\Delta_c$ is invariant
under $F_2$. But recall that
\[
\image(T_2-I)\subset\ker(T_1'-I),
\]
so $F_2$ leaves $P_1\Delta_c$ invariant as well.

Write
\[
B_1=P_1\Delta_c,
\qquad
B_2=F_1^{-1}(P_2\Delta_c).
\]
Since $(\tau_1,\tau_2,\tau_3)=(1,3,9)$ and
$c<1/9$,
Lemma~\ref{lem:preimages_in_critical_stripe_dimd}
implies that
\[
\#\bigl(f_n^{-1}(x)\cap B_i\bigr)\leq3
\]
for $i=1,2$ and every $x\in\T^3$.

Consequently, for every $x \in \TTT$, $f_n^{-1}(x)$ has at most 6 points in the
"bad" region $B_1 \cup B_2$. 

We will now show that  $\m(D f_n (x))$ is close to $3$ outside $B_1 \cup
B_2$,  establish a global lower bound for $\m(Df_n(x))$, and verify that
\eqref{eq:mexpanding} with $F=f_n$.

Recall that we have chosen $c_0=1/10$.
Hence, from Lemma~\ref{lem:sprime_convolution},
the derivative of $s_{c_0,n}$ is bounded below
by $-9$ and above by $1$.

Thus, writing
\[
N_1=(T_1')^{-1}-I=\frac1{400}uv^T,
\qquad
N_2=T_2^{-1}-I=b_2a_2^T,
\]
we see that the derivative of $f_n$ at any
point $x\in\T^3$ can be written in the form
\[
Df_n(x)=A(I+aN_2)(I+bN_1)
\]
for some $-9\leq a,b\leq1$ (depending on $x$).

Since $(T_1')^{-1}$ and $T_2^{-1}$ are
transvections, we have $N_1^2=N_2^2=0$.
Moreover, from
\[
\image(T_2^{-1}-I)
\subset\ker((T_1')^{-1}-I)
\]
we also have $N_1N_2=0$. Hence
\[
\begin{aligned}
(Df_n(x))^{-1}
 \bigl(AT_2^{-1}(T_1')^{-1}\bigr)
&=(I-bN_1)(I-aN_2)(I+N_2)(I+N_1)\\
&=I+(1-b)N_1+(1-a)N_2(I+N_1).
\end{aligned}
\]

Put
\[
H=AT_2^{-1}(T_1')^{-1}.
\]
A direct calculation gives
\[
H=\frac1{400}
\begin{pmatrix}
-780&-420&-781\\
480&720&-904\\
780&-780&-419
\end{pmatrix}.
\]
The matrix $H^TH-\frac{196}{25}I$ is
positive definite. Hence
\[
	\langle H v, Hv \rangle - \frac{196}{25} \langle v, v
	\rangle>0,
\]
for every non-zero $v \in \R^3$, so
\[
\m(H)>\frac{14}{5}.
\]

Since $D f_n$ converges uniformly to $H$ outside $B_1 \cup B_2$, for large $n$
we have 
\begin{equation} \label{eq:lower_bound_mDf_3D}
	\m(Df_n(x))> \frac{14}{5}
\end{equation}
for every $x \notin B_1 \cup B_2$.

It remains to establish a uniform lower bound for $\m(Df_n(x))$.
Using the Frobenius norm, a direct
computation gives
\[
\max_{\alpha,\beta\in\{0,10\}}
\left\|
\bigl[I+\alpha N_2(I+N_1)+\beta N_1\bigr]
H^{-1}
\right\|_F^2 <32^2.
\]
By convexity, the same upper bound holds for
every $\alpha,\beta\in[0,10]$. Consequently,
\[
\|(Df_n(x))^{-1}\|<32
\]
for every $x$, or, equivalently,
\[
\m(Df_n(x))>\frac1{32}
\]
for every $x \in \TTT$.

Finally, we conclude by noticing that for such $n$,
\[
	\prod_{x' \in f_n^{-1}(x)} \m(D f_n(x')) >
	\left(\frac{14}{5} \right)^{21} 
	\left( \frac{1}{32} \right)^6 > 1
\]
for every $x \in \TTT$. Since $f_n$ has constant
Jacobian, this imples that $f_n$ is collectively for large $n$.

\end{example}







\section{An applicability criterion in dimension two}

\begin{theorem} \label{thm:2D}
	Let $A \in \Mtt$ be a matrix of degree $J = |\det A| \geq 2$ with
	invariant factors $(\tau_1, \tau_2)$. 
	If
	\begin{equation} \label{eq:algebraic_cond}
 \|A \| - \m(A) < \frac{J^{\tau_2} - 1}{\tau_2 J^{(\tau_2-1)/2}}
	\end{equation}
then there exsits an ergodic conservative
real analytic endomorphism $f: \TT \to \TT$ with constant Jacobian, homotopic
to $A$, which is colectively expanding.
\end{theorem}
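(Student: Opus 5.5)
The plan is to run the argument of Example~\ref{ex:5111} in general and keep track of the constants. First normalize $A$. Put $\rho=\sqrt{J}$ and apply Lemma~\ref{lem:orthogonal_times_shear_dim2} to $A/\rho$. If $\det A<0$, first compose with a reflection, as at the end of the proof of Theorem~\ref{thm:nue}. This gives $A=\rho\, Q\, T$ with $Q\in\Ort(2,\R)$ and $T\in\GT$.

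The singular values of $T$ are $\|A\|/\rho$ and $\m(A)/\rho$. Writing $T=R\Sigma_tR^{-1}$ with $R$ orthogonal, item~\eqref{norm_conorm_difference} of Proposition~\ref{prop:properties_transvections} gives
\[
|t|=\frac{\|A\|-\m(A)}{\sqrt J}.
\]
Next, Lemma~\ref{lem:density_of_A_transvections} gives $T'=P\Sigma_{s}P^{-1}\in\ST_A$ arbitrarily close to $T$, with $P\in\SL(2,\Z)$ and $\ind(APE)=\tau_1$. By item~\eqref{tranvections_parameter_value}, $\kappa |s|$ is close to $|t|$, where $\kappa=\|Pe_1\|^2$. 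Set $H=AT'^{-1}$, which is close to $\rho Q$. Fix $c_0<c<1/\tau_2$ with $c_0$ close to $1/\tau_2$, and define
\[
f_n=A\circ P\circ S_{c_0,n,-s}\circ P^{-1}.
\]
This map is real analytic, homotopic to $A$, and has constant Jacobian $J$, because the shears preserve volume and $P\in\SL(2,\Z)$.

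Next, estimate the conorm of $Df_n$. At every point,
\[
Df_n=H\,P\Sigma_{s(1-s'_{c_0,n})}P^{-1}.
\]
By Lemma~\ref{lem:sprime_convolution}\ref{est:whole_circle}, $0\le 1-s'_{c_0,n}\le 1/c_0$, so the effective shear parameter has size at most $\kappa|s|/c_0\approx \tau_2|t|$. Items~\eqref{phi_formula} and~\eqref{tranvections_parameter_value} then give the global bound
\[
\m(Df_n)\gtrsim \frac{\sqrt J}{\varphi(\tau_2|t|)}.
\]
Outside the strip $P\Delta_c$, Lemma~\ref{lem:sprime_convolution}\ref{est:far_from_crit} gives $s'\to1$ uniformly. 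There $Df_n\to H$, so $\m(Df_n)>\sqrt J-\eta$ for large $n$ and any prescribed $\eta>0$. Since $c<1/\tau_2$ and $\ind(APE)=\tau_1$, Lemma~\ref{lem:preimages_in_critical_stripe_dimd} says each point has at most $\tau_1$ of its $J$ preimages in $P\Delta_c$.

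Because the Jacobian is constant, collective expansion for $F=f_n$ is equivalent to
\[
\prod_{x'\in f_n^{-1}(x)}\m(Df_n(x'))>1 .
\]
With the bounds above and $J=\tau_1\tau_2$, this product is at least roughly $J^{J/2}\varphi(\tau_2|t|)^{-\tau_1}$, so the requirement is
\[
\varphi(\tau_2|t|)<J^{\tau_2/2}.
\]
By item~\eqref{phiinverse_formula}, this is equivalent to
\[
\tau_2|t|<J^{\tau_2/2}-J^{-\tau_2/2}.
\]
Substituting $|t|=(\|A\|-\m(A))/\sqrt J$ turns this exactly into~\eqref{eq:algebraic_cond}.

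The main obstacle is making every approximation fit inside the strict inequality~\eqref{eq:algebraic_cond}. The approximations are: $T'\approx T$ (which governs how close $\kappa s$ is to $t$ and $H$ to $\rho Q$), $1/c_0\approx\tau_2$, and $\eta\to0$ via $n\to\infty$. I would choose the slack $\gamma$ in~\eqref{eq:algebraic_cond} first. Then pick $c_0<c<1/\tau_2$ so that $\tau_2$ replaced by $1/c_0$ costs less than $\gamma/3$. Then use Lemma~\ref{lem:density_of_A_transvections} to make $|\kappa s-t|$ and $\|H-\rho Q\|$ small enough that $\m(H)$ and the parameter move by less than $\gamma/3$. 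Finally take $n$ large. A small point to check is that the bad-region bound uses $\m(H)\,\m(P\Sigma_{r}P^{-1})$ rather than the exact product. This costs only the factor $\m(H)/\sqrt J\to1$, so it is absorbed. Once collective expansion holds, Theorem~\ref{thm:ergodic_mexpanding} gives ergodicity (indeed stable ergodicity), and the theorem follows.
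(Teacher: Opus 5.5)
Your proposal is correct and follows the paper's proof essentially step for step. The common steps are the decomposition $A/\sqrt J = QT$, the approximation of the transvection by an element of $\ST_A$ via Lemma~\ref{lem:density_of_A_transvections}, the global conorm bound $\sqrt J/\varphi(|t|/c)$ with $1/c$ just above $\tau_2$, the count of at most $\tau_1$ preimages in the critical strip from Lemma~\ref{lem:preimages_in_critical_stripe_dimd}, and the reduction of $\prod \m(Df(x'))>1$ to $\tau_2\psi(\|\hA\|)<\psi(J^{\tau_2/2})$, which is \eqref{eq:algebraic_cond}. The only difference is cosmetic: the paper approximates $T^{-1}$ and bounds the conorm through the limit matrix $\sqrt J\,QR\Sigma_{t(1-\alpha)}R^{-1}$, while you factor $Df_n = H\,P\Sigma_{s(1-s'_{c_0,n})}P^{-1}$ and use $\m(XY)\geq\m(X)\m(Y)$ as in Example~\ref{ex:5111}, which gives the same estimate.
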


\begin{proof}[Proof of Theorem \ref{thm:2D}]
	Let $\hA=A/ \sqrt{J}$ so that $|\det \hA|=1$. 
By Lemma~\ref{lem:orthogonal_times_shear_dim2}, we have 
$\hA=QT$ for some $Q\in \Ort(2,\R)$ and $T\in\GT$. In particular, $\|T\|=\|\hA\|$, 
so $T$ can be written as $T = R \Sigma_t R^{-1}$ for $t = \psi(\|\hA\|)$ and
some $R \in \Ort(2,\R)$. 

By Lemma~\ref{lem:density_of_A_transvections}, we can find a sequence $S_k  \in \ST_A$
converging to $T^{-1}$. Let us write these as $P_k \Sigma_{t_k} P_k^{-1}$ for $P_k \in
\SL(2,\Z)$.

For $0<c<\tau_2^{-1}$ and $n\geq1$, consider
\[
	f_{k, c, n} =A\circ P_k\circ S_{c,n,t_k}\circ P_k^{-1}.
\]
Here, $S_{c,n,t}$ abbreviates $S_{1,2,c,n,t}$, so that
\[S_{c,n,t}(x,y) = (x + t s_{c,n}(y), y).\]
Each $f_{k,c,n}$ is a conservative real analytic endomorphism, homotopic to $A$, with
constant Jacobian $J$. 

We first obtain a global lower bound on the conorm of $Df_{k,c,n}$.
Writing $\alpha = s_{c,n}'(y)$,
we have
\begin{align*}
	D f_{k,c,n}(P_k(x,y))  & = A P_k \Sigma_{\alpha t_k} P_k^{-1} \\
			     & = A P_k \left[ I + \alpha (\Sigma_{t_k} - I) \right]
			     P_k^{-1} \\
			     & = A \left[ I + \alpha(S_k - I) \right]
\end{align*}
for every $(x,y) \in \TT$. But $A \left[ I + \alpha(S_k - I) \right] $
converges to 
\begin{align}
	A \left[ I + \alpha(T^{-1} - I) \right] 
	& =  A\left[ I + \alpha(R \Sigma_{-t} R^{-1} - I)\right] \\
	& =  A R \Sigma_{-\alpha t} R^{-1} \\
	& = \sqrt{J} Q R \Sigma_{t(1-\alpha)}R^{-1} \label{eq:bad_derivative}
\end{align}
uniformly for $\alpha$ in any compact set.
From Lemma~\ref{lem:sprime_convolution} we see that $1-\frac{1}{c} \leq \alpha
\leq 1$, so the conorm of \eqref{eq:bad_derivative} is bounded below 
by $\sqrt{J}/varphi(t/c)$. Thus for any $\delta>0$ and sufficiently large $k$, we have 
\[\m ( Df_{k,c,n}(x,y) ) > \frac{\sqrt{J}}{varphi(t/c)}-\delta \]
	for every $(x,y) \in \TT$ and every $n$. 
Let $L=J^{\tau_2/2}$. Since
$\m(A)=J/\|A\|$, condition~\eqref{eq:algebraic_cond}
is equivalent to
\[
\tau_2\psi(|\hA|)<\psi(L).
\]
As $t=\psi(|\hA|)$, it follows that
\[
varphi(t/c)<L
\]
for $c<\tau_2^{-1}$ sufficiently close to $\tau_2^{-1}$.
Fix such a $c$. Since
\[
\frac{\sqrt J}{varphi(t/c)}>\frac{\sqrt J}{L},
\]
we can choose $\eta>0$ and then $k$ sufficiently large so that
\[
\m(Df_{k,c,n}(x))>(1+\eta)\frac{\sqrt J}{L}
\]
for every $x\in\TT$ and every $n$.

We now show that the conorm of $Df_{k,c,n}$ is close to $\sqrt{J}$ for
points outside the "critical strip".
For $0< \rho <1$ write 
\[
	\Delta_\rho = \T \times \left(-\frac{\rho}{2} , \frac{\rho}{2}\right).
\]
Then, for every $c< \tau_2^{-1}$, the derivative of $P_k \circ
S_{c,n, t_k} P_k^{-1}$ converges to $S_k$ uniformly on 
the complement of $P_k \Delta_{\tau_2^{-1}}$ 
as $n \to \infty$. 
Fix $\epsilon>0$ sufficiently small that
\[
(1-\epsilon)^{J-\tau_1}(1+\eta)^{\tau_1}>1.
\]
Increasing $k$ if necessary, and then choosing $n$ sufficiently
large, we obtain
\[
\m(Df_{k,c,n}(x))>\sqrt J(1-\epsilon)
\]
for every $x\notin P_k\Delta_{\tau_2^{-1}}$.
The global lower bound established above remains valid.

Lemma~\ref{lem:preimages_in_critical_stripe_dimd} guarantees that
\[
\#\bigl(f_{k,c,n}^{-1}(x)\cap
P_k\Delta_{\tau_2^{-1}}\bigr)\leq\tau_1.
\]
Hence, using $L^{\tau_1} = J^{J/2}$, we have that
\begin{align}
	\prod_{y\in f_{k,c,n}^{-1}(x)}\m(Df_{k,c,n}(y))
& > \left(\sqrt{J} (1-\epsilon)\right)^{J-\tau_1}
\left(\frac{\sqrt{ J }}{L} (1+\eta) \right)^{\tau_1} \\
&= (1-\epsilon)^{J-\tau_1} (1+\eta)^{\tau_1} >1.
\end{align}
We have therefore shown that $k$, $c$, and $n$ can be chosen so that the map 
$f = f_{k,c,n}$ is collectively
exapnding. By Theorem~\ref{thm:ergodic_mexpanding}, $f$ is non-uniformly expanding and
stably ergodic. In particular, $f$ is ergodic and $C^1$ robustly
non-uniformly expanding.
\end{proof}

\bibliographystyle{alpha}
\bibliography{biblio}

\end{document}